\newtheorem{theorem}{Theorem}[section]
\newtheorem{corollary}[theorem]{Corollary}
\newtheorem{lemma}[theorem]{Lemma}
\theoremstyle{definition}
\newtheorem{conjecture}{Conjecture}
\newtheorem{problem}{Problem}
\numberwithin{equation}{section}
\renewcommand\footnotemark{}
\begin{document}
	
	\title{Determinants of  some Special Matrices over Commutative Finite  Chain Rings\thanks{This research was supported by the Thailand Research Fund and  Silpakorn University under Research Grant RSA6280042.} }
	
	\author{Somphong~Jitman}

	\thanks{S. Jitman  is with the  Department of Mathematics, Faculty of Science,
		Silpakorn University, Nakhon Pathom 73000,  Thailand
		(email: sjitman@gmail.com).}

	%  \thanks{This research was  supported by the Thailand Research Fund and the Office of Higher Education Commission of Thailand under Research Grant MRG6080012.}

	\maketitle
	
	\begin{abstract}
Circulant matrices over finite fields and over commutative finite chain rings  have been of interest due to their nice algebraic structures and wide applications. In many cases, such matrices over rings have a closed connection with diagonal matrices over their extension rings. 
In this paper,  the determinants of  diagonal and circulant   matrices over  commutative finite chain rings $R$ with residue field $\mathbb{F}_q$    are studied.  	The number of  $n\times n$ diagonal  matrices over  ${R}$ of determinant  $a$  is determined for all elements $a$ in $ {R}$ and for all positive integers $n$.   Subsequently,  the enumeration  of  nonsingular $n\times n$ circulant  matrices over   ${R}$  of determinant   $a$  is given for all units $a$ in $ {R}$ and all positive integers $n$ such that $\gcd(n,q)=1$.    In some cases, the number of singular  $n\times n$ circulant  matrices over   ${R}$   with a fixed determinant is determined through the link between  the rings of  circulant matrices  and diagonal matrices.
As applications, a brief discussion on the determinant of diagonal and circulant matrices over  commutative finite principal ideal rings   is given. Finally, some open problems and conjectures are posted

		\noindent{Keywords: } {Determinants,  Diagonal matrices,  Circulant  matrices,  Commutative finite chain rings}

		\noindent{MSC2010: }{11C20, 15B33,  13F10}
	\end{abstract}

 \section{Introduction}
 Circulant   matrices  have been  introduced  in   \cite{c1846} and  extensively been  studied  due to their nice  algebraic structures, wide  applications,  various links with other objects.   The  book ``Circulant Matrices''  \cite{D1994} has summarized algebraic structures, properties and  applications of   such matrices. 
 Later, circulant  matrices have been shown to have applications in  many disciplines, e.g., signal processing, image processing, networked systems, communications,  and coding theory.    Especially, (nonsingular)  circulant  matrices  over finite fields and over commutative finite chain rings  are  applied in constructions of  various  families of linear codes  (see \cite{AOS2018},  \cite{D1994},    \cite{GSF2016},  \cite{GG2009}, \cite{GH1998},  \cite{KH2012}, \cite{KS2012},      \cite{SHSS2019},   \cite{SHSS2019-2}, \cite{SZQS2018} ,  and references therein).   Circulant  matrices have  shown to have a closed connection with diagonal matrices  (see, for example,   \cite{D1994} and \cite{KS2012}). Therefore, some properties of circulant matrices can be determined in terms of diagonal matrices.

 Determinants   of matrices are  known for their  useful properties and applications in  linear algebra, matrix theory, and  other  braces of Mathematics   and Engineering.    
 Since the singularity of matrices is useful in applications,  determinants and related properties of matrices  have been extensively studied.      The number of  $n\times n$ singular (resp., nonsingular) matrices over a finite field  has been  given  in \cite{M1984}.     The number of $n\times n$ matrices over   commutative finite chain rings (CFCRs)  of a fixed  determinant has been completely determined  in \cite{CJU2017}.   Diagonal and circulant matrices   are  two interesting subfamilies of the  ones in \cite{CJU2017}. 
 Therefore, it is  of natural interest to study  the determinants of such matrices   over  CFCRs.

 The paper is organized as follows.   Some  definitions and useful properties of CFCRs $R$ and matrices are recalled in Section~\ref{sec:pre}. In Section~\ref{sec:diag}, the  number $d_n(R,a)$ of $n\times n$ diagonal  matrices over   $R$  of   determinant $a$   is established for all elements  $a \in R$ and  for all positive integers  $n$.   In Section~\ref{sec:cir}, the  number $c_n(R,a)$ of $n\times n$  circulant   matrices over    $R$      is studied. In the case where $a$ is a unit in $R$, $c_n(R,a)$  is determined for all positive integers $n$  such that $\gcd(n,q)=1$.   For  a non-unit   $a \in R$, the number  $c_n(R,a)$  is given  for all positive integers  $n$  such that $n|(q-1)$.   Summary, remarks, conjectures, and open problems are given in  Section~\ref{sec:concl}.

 \section{Preliminaries} \label{sec:pre}
 
 Definitions, notations,  and some useful properties of   commutative finite chain rings and matrices are recalled.

 \subsection{Commutative Finite Chain Rings}
 A ring  $R$ with identity $1\neq 0$  is  called a
 \textit{commutative finite chain ring (CFCR)} if it is finite,  commutative, and  its  ideals are linearly ordered by
 inclusion. The properties of CFCRs required  in this paper are recalled in the discussion below. The reader is referred to    \cite{GF2002},  \cite{H2001}, and \cite{HLM2003}  for more details on  CFCRs.

 A   CFCR  is known to be  a principal ideal ring and  its 
 maximal ideal is unique.  Let $R$ be a CFCR  whose maximal ideal is generated by  $\gamma$.  The ideals in $R$  can be written  in the forms of 
 \[R\supsetneq \gamma R \supsetneq \gamma^2 R\supsetneq \dots \supsetneq \gamma ^{e-1} R \supsetneq \gamma ^{e} R=\{0\},\]
 for some positive integer $e$.  The {\em nilpotency  index} of a CFCR $R$ is defined to be the smallest
 positive integer $e$ such that    $\gamma^e=0$.    The  quotient ring $R/\gamma R$ forms  a finite  field  referred as  the {\em residue field} of $R$.  The   cardinality and the  characteristic 
 of  $R$  are  powers
 of the characteristic of  $R/\gamma R$. For a CFCR $R$, let  $U(R)$ denote  the set of units in $R$ and let $Z(R)$ denote the set of zero-divisors in $R$.  The properties of a CFCR in the next lemma are well known.

 \begin{lemma}[{\cite{H2001} and \cite{HLM2003}}]  \label{lem:propCR}
 	Let $R$ be a CFCR of nilpotency index $e$ and let $\gamma$ be  a generator of the maximal ideal of $R$.   Let $ V \subseteq R$  be  a set of representatives for
 	the equivalence classes of $R$ under congruence modulo $\gamma$.
 	Assume that the residue field $R/\langle \gamma\rangle\cong \mathbb{F}_{q}$ for some prime power $q$. % where $q$ is a prime power
 	Then the following statements hold.
 	\begin{enumerate}[$1)$]
 		\item For each $r\in R$,  there exist  unique  $a_0, a_1 ,\dots
 		a_{e-1}\in
 		V$ such that $$r=a_0+a_1\gamma+\dots+a_{e-1}\gamma^{e-1}. $$
 		\item  $|V| = q$.
 		\item $| \gamma^j R|=
 		q^{e-j}$  for all $0\leq j\leq e$.
 		\item $U(R)= \{a+\gamma b\mid a\in  V\setminus\{0\} \text{ and } b\in R\}$.
 		\item $|U(R)|=(q-1)q^{e-1}$.
 		\item For each  $0\leq i\leq e$, 
 		$ R/\gamma^iR$ is a CFCR of nilpotency index $i$ and residue field $\mathbb{F}_q$.
 	\end{enumerate}
 \end{lemma}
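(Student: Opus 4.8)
The plan is to derive the whole lemma from part~1), the existence and uniqueness of the $\gamma$-adic expansion, leaving only part~6) for a separate short argument about quotients.

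For existence in part~1), I would descend through the chain of ideals. Given $r\in R$, pick the unique $a_0\in V$ with $a_0\equiv r\pmod\gamma$, so that $r-a_0=\gamma s_1$ for some $s_1\in R$; then pick $a_1\in V$ with $a_1\equiv s_1\pmod\gamma$, whence $r-a_0-a_1\gamma=\gamma(s_1-a_1)\in\gamma^2R$; iterating this step $e$ times leaves $r-\sum_{i=0}^{e-1}a_i\gamma^i\in\gamma^eR=\{0\}$. For uniqueness, suppose $\sum_{i=0}^{e-1}a_i\gamma^i=\sum_{i=0}^{e-1}b_i\gamma^i$ with all $a_i,b_i\in V$ but not all coefficients equal, and let $j$ be least with $a_j\neq b_j$. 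Factoring out $\gamma^j$ gives $\gamma^j\bigl((a_j-b_j)+\sum_{i>j}(a_i-b_i)\gamma^{i-j}\bigr)=0$; the bracketed element reduces modulo $\gamma$ to $a_j-b_j\neq0$ in $\mathbb{F}_q$, so it lies outside the unique maximal ideal $\gamma R$ and is therefore a unit, forcing $\gamma^j=0$ and hence $j\ge e$, a contradiction. I expect this cancellation to be the one point requiring care: since $\gamma$ is a zero-divisor as soon as $e\ge2$, one cannot cancel a factor of $\gamma$ directly and must argue instead that a unit multiple of $\gamma^j$ can vanish only when $\gamma^j$ itself does.

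Parts~2)--5) are then bookkeeping on the expansion. Part~2) is immediate, as $V$ is a transversal of $R/\langle\gamma\rangle\cong\mathbb{F}_q$. For part~3), counting $\gamma$-adic expansions gives $|R|=q^e$, and since each element of $\gamma^jR$ has the form $\gamma^j\bigl(\sum_{i=0}^{e-1}a_i\gamma^i\bigr)=\sum_{k=0}^{e-1-j}a_k\gamma^{j+k}$ with this representation again unique (by the same least-index argument), we get $|\gamma^jR|=q^{e-j}$. For part~4), an element $r$ is a unit if and only if it lies outside the unique maximal ideal $\gamma R$, which by part~1) means exactly that its leading coefficient $a_0$ is nonzero; grouping the remaining terms as $\gamma b$ with $b\in R$ yields the stated set, which over-parametrizes $U(R)$ but is correct as an equality of sets. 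Part~5) then follows by subtraction, $|U(R)|=|R|-|\gamma R|=q^e-q^{e-1}=(q-1)q^{e-1}$.

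Finally, for part~6) I would invoke the correspondence theorem: the ideals of $R/\gamma^iR$ are precisely $J/\gamma^iR$ for ideals $J\supseteq\gamma^iR$ of $R$, that is, the finite chain $R\supseteq\gamma R\supseteq\dots\supseteq\gamma^iR$, so $R/\gamma^iR$ is again a commutative finite chain ring with maximal ideal $\langle\gamma+\gamma^iR\rangle$. Its nilpotency index is the least $j$ with $\gamma^j\in\gamma^iR$, namely $j=i$, and its residue field is $\bigl(R/\gamma^iR\bigr)\big/\bigl(\gamma R/\gamma^iR\bigr)\cong R/\langle\gamma\rangle\cong\mathbb{F}_q$. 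Every step here reduces to the total ordering of the ideals of $R$ and the definition of the nilpotency index, so I anticipate no real difficulty beyond carrying out part~1) cleanly.
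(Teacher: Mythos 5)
The paper does not prove this lemma at all: it is quoted as a known result with citations to \cite{H2001} and \cite{HLM2003}, so there is no in-paper argument to compare against. Your proof is a correct, self-contained derivation of the standard facts, organized around the $\gamma$-adic expansion in exactly the way the cited references do it. The two delicate points are handled properly: for uniqueness you correctly avoid cancelling $\gamma$ (a zero-divisor when $e\geq 2$) and instead observe that the cofactor of $\gamma^j$ is a unit because its image in $R/\gamma R$ is $a_j-b_j\neq 0$, so that a unit multiple of $\gamma^j$ vanishes only if $\gamma^j$ does, forcing $j\geq e$; and for part~3) you reuse the same least-index argument to see that the truncated expansions of elements of $\gamma^jR$ are again distinct, giving $|\gamma^jR|=q^{e-j}$. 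Parts 4) and 5) then follow from locality ($U(R)=R\setminus\gamma R$) and subtraction, and part~6) from the correspondence and third isomorphism theorems, with the nilpotency index of $R/\gamma^iR$ pinned down by the strictness of the chain $\gamma^jR\supsetneq\gamma^iR$ for $j<i$. Two cosmetic caveats, both inherited from the statement rather than introduced by you: the phrase ``$a_0$ is nonzero'' in part~4) presumes $0\in V$ (as the statement's own ``$V\setminus\{0\}$'' already does), and the case $i=0$ of part~6) yields the zero ring, which strictly speaking is excluded by the paper's definition of a CFCR. Neither affects the validity of your argument.
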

 
 From Lemma~\ref{lem:propCR},  it can be deduced that   $Z(R)=\gamma R\setminus \{0\}$,   $|Z(R)|= q^{e-1}-1$, and $R=\{0\}\cup  Z(R)\cup U(R)$ is a disjoint union.

 \subsection{Diagonal and Circulant Matrices}

 Given a positive integer $n$ and a commutative ring $\mathfrak{R}$,  an $n\times n$ matrix  $A$ over $\mathfrak{R}$ is called a {\em diagonal matrix} if    $a_{ij}=0$ for all $i\ne j$. Denote by ${\rm diag}(a_{11}, a_{22},\dots, a_{nn})$ the diagonal matrix $A$.    It is well known that \[\det({\rm diag} (a_1,a_2,\dots,a_n) )=\prod_{i=1}^{n}a_i.\]
 Let $D_n(\mathfrak{R})=\{ {\rm diag} (a_1,a_2,\dots,a_n) \mid a_i\in \mathfrak{R}\}$ denote the set of $n\times n$ diagonal matrices over $\mathfrak{R}$.   For each $n\in \mathbb{N} $ and  $a\in \mathfrak{R}$,  let $D_n(\mathfrak{R},a)=\{ A\in D_n(\mathfrak{R})\mid \det(A)=a\}$ and $d_n(\mathfrak{R},a)=|D_n(\mathfrak{R},a)|$.

 An $n\times n$ matrix $A$ over $\mathfrak{R}$ is called a {\em circulant matrix} if $A$ is of  the form
 \[A=\begin{bmatrix}
 a_1&a_2&a_3&\dots& a_n\\
 a_{n}&a_1&a_2&\dots& a_{n-1}\\
 a_{n-1}&a_{n}&a_1&\dots& a_{n-2}\\
 \vdots & \vdots &\vdots & \ddots &\vdots \\
 a_{2}&a_{3}&a_4&\dots& a_{1}\\
 \end{bmatrix},
 \]
 for some $a_1,a_2,\dots,a_n$ in  $\mathfrak{R}$, and denoted its by ${\rm cir}( a_1,a_2,\dots,a_n)$. 
 If there exists an  extension ring of  $\mathfrak{R}$  containing a primitive $n$th root of unity, say  $\omega$, the eigenvalues and determinant of a circulant matrix  $A={\rm cir}( a_1,a_2,\dots,a_n) $ over $ \mathfrak{R}$ can be given as follows.  From \cite{D1994} and \cite{KS2012},   the eigenvalues of $A$ are of the form 
 \begin{align} \label{eig}
 w_j=\sum_{i=1}^n a_i\omega^{(i-1)j}\end{align} for all $0\leq j\leq n-1$.
 It follows that 
 \[ \det(A) =\prod_{j=0}^n \omega_j= \prod_{j=0} ^{n-1}  \left(\sum_{i=1}^n a_i\omega^{(i-1)j}\right).\]
 Let  $C_n(\mathfrak{R})=\{ {\rm cir}( a_1,a_2,\dots,a_n)\mid  a_i\in \mathfrak{R}\}$ denote the set of $n\times n$ circulant matrices over the ring $\mathfrak{R}$.  
 Let   $C_n(\mathfrak{R},a) =\{ A\in C_n( \mathfrak{R}) \mid \det(A)=a\}$  be the set of $n\times n$ circulant matrices over $\mathfrak{R}$ whose   determinant  is  $a$ and let $c_n(\mathfrak{R},a)=|C_n(\mathfrak{R},a)|$.

 In  this paper, we focus on  the numbers $c_n(R,a)$ and  $d_n(R,a)$ in the case where  $R$  is  CFCRs and $a\in R$ which are established   in Section~\ref{sec:diag} and Section~\ref{sec:cir}, respectively.

 \section{Determinants of Diagonal Matrices  over CFCRs}
 \label{sec:diag}
 
 Determinants of diagonal matrices over a CFCR $R$  are focused on and the number $d_n({R},a)$ is completely determined for all positive integers $n$ and for all $a\in R$. 
 
 For  a CFCR  $R$   of nilpotency index $e$ and residue field $\mathbb{F}_q$,    let $\gamma$  be a generator   its maximal ideal. For each $a\in R$,  it is easily seen that $a =\gamma^sb$   for some $0\leq s\leq e$ and unit  $b\in U(R)$ by Lemma~\ref{lem:propCR}.  Precisely, $a$ is a unit if $s=0$, 
 $a =\gamma^sb$ is  a zero-divisor if $1\leq s\leq e-1$, and  
 $a=0$ if $s=e$.

 For each $n\in \mathbb{N}$ and $a\in R$, the  number  $d_n(R,a)$   are   determined  in this section. The over view results are   summarized in Figure~\ref{D1} and the details
 are given right after.

 \begin{figure}[!hbt]
 	\centering
 	{\scriptsize
 		\begin{tikzpicture} [
 		block/.style    = { rectangle, draw=black, 
 			fill=black!5, text width=13em, text centered,
 			rounded corners, minimum height=2em },
 		line/.style     = { draw, thick, ->, shorten >=2pt },
 		]
 		% Define nodes in a matrix
 		\matrix [column sep=5mm, row sep=8  mm] {
 			
 			& \node [block] (oria) {$a \in R$ };            & \\ %  
 			
 			& \node [block] (a) {$a=\gamma^s b$, $0\leq s \leq e$ and $b \in U(R)$ }; \\
 			
 			\node [block] (bri) {$a=\gamma^s b$, $0\leq s <e$   }; &  &\node [block] (a0) {$a =0$ };  \\ 
 			
 			\node [block] (aur1) {$d_n(R,a)=d_n(R,\gamma^s)$ };  && \\
 			
 			\node [block] (au1) {$a=1$ };  	   &\node [block] (rs) {$a=\gamma^s$ };  &	\node [block] (a01) {$d_n(R,0)$ };  \\
 			
 			\node [block] (au11) {$d_n(R,1) $ };  	   &\node [block] (rs1) {$ d_n(R,\gamma^s)$ };  &	  \\
 			
 			&\node [block] (end) {$ d_n(R,a)$ };  &	\\ 
 		};
 		% connect all nodes defined above
 		\begin{scope} [every path/.style=line]
 		\path (oria)        --   node{\quad\quad \quad\quad \quad \quad ~ Lemma~\ref{lem:propCR}}   (a); 
 		
 		\path (a)        --   node{$0\leq s <e$\quad\quad\quad \quad \quad  \quad \quad }   (bri); 
 		\path (a)        --  node{\quad \quad \quad \quad $s=e$}  (a0);

 		\path (bri)        --    node{\quad\quad\quad  \quad\quad\quad ~ ~ Theorem ~\ref{ar=rD}}(aur1);  
 		
 		\path (aur1)        --    node{\quad\quad\quad $e=0$}(au1);  
 		\path (aur1)        --     node{\quad\quad\quad \quad\quad\quad \quad\quad $1 \leq s<e$}(rs);  
 		
 		\path (a0)        --    node{\quad\quad\quad \quad\quad\quad  ~ ~  Theorem~\ref{thma0D}}(a01);  
 		
 		\path (au1)        --  node{\quad\quad\quad \quad\quad\quad ~ ~  Corollary~\ref{a=1D}}  (au11);  
 		\path (rs)        --   node{\quad\quad\quad \quad\quad\quad   ~ ~ Theorem~\ref{a=rD}} (rs1);

 		\path (au11)        --    (end);  
 		\path (rs1)        --    (end);  
 		\path (a01)        --    (end);  
 		\end{scope}
 		%
 		% legend for subprocedures
 		% legend for input and output variables
 		\end{tikzpicture}
 	}
 	\caption{The number $d_n(R,a)$ over a CFCR  $R$} \label{D1}
 \end{figure}
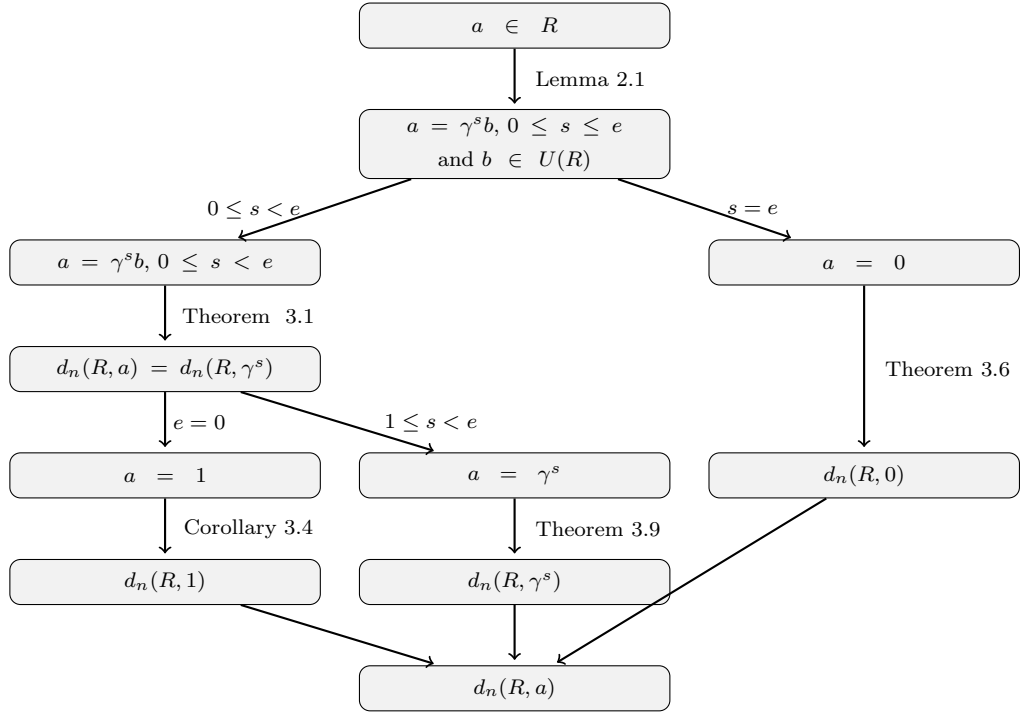

 First, we begin with a reduced  formula for  the number  $d_n(R,a)$. 
 
 \begin{theorem} \label{ar=rD} Let $R$ be a CFCR of nilpotency index $e$ and let $n$ be a positive integer. If the maximal ideal of $R$ is generated by $\gamma$ and $0\leq s\leq e$, then the following statements hold.
 	\begin{enumerate}[$1)$]
 		\item  $d_n(R,a)>0$ for all elements  $a\in R$.
 		\item $d_n(R,\gamma^s)=d_n(R,b\gamma^s)$ for all units $b$ in $U(R)$.  
 	\end{enumerate}
 \end{theorem}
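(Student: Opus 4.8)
The plan is to prove the two parts independently by elementary arguments about products of diagonal entries; beyond the basic facts already recorded, only the invertibility of units (part of Lemma~\ref{lem:propCR}) is needed.

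\emph{Part 1.} Since $\det({\rm diag}(a_1,\dots,a_n)) = \prod_{i=1}^n a_i$, it suffices to exhibit one diagonal matrix of determinant $a$, and ${\rm diag}(a,1,\dots,1)$ works because its determinant is $a\cdot 1\cdots 1 = a$. Hence $d_n(R,a)\ge 1$ for every $a\in R$ and every positive integer $n$.

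\emph{Part 2.} Fix $0\le s\le e$ and a unit $b\in U(R)$. I would build an explicit bijection between $D_n(R,\gamma^s)$ and $D_n(R,b\gamma^s)$. Define $\varphi\colon D_n(R,\gamma^s)\to D_n(R)$ by $\varphi({\rm diag}(a_1,a_2,\dots,a_n)) = {\rm diag}(ba_1,a_2,\dots,a_n)$; the determinant of the image equals $b\prod_{i=1}^n a_i = b\gamma^s$, so $\varphi$ maps into $D_n(R,b\gamma^s)$. Since $b$ is a unit, $\psi({\rm diag}(c_1,c_2,\dots,c_n)) = {\rm diag}(b^{-1}c_1,c_2,\dots,c_n)$ defines a map $D_n(R,b\gamma^s)\to D_n(R,\gamma^s)$ that is a two-sided inverse of $\varphi$. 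Therefore $\varphi$ is a bijection, and $d_n(R,\gamma^s) = |D_n(R,\gamma^s)| = |D_n(R,b\gamma^s)| = d_n(R,b\gamma^s)$.

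I do not expect any real obstacle here: both claims are formal consequences of the multiplicativity of the determinant on diagonal matrices together with the invertibility of units. One point worth recording in the write-up is that the same bijection in fact yields $d_n(R,a) = d_n(R,ua)$ for every $a\in R$ and every unit $u\in U(R)$; this is precisely what reduces the computation of $d_n(R,a)$ to the finitely many representative values $a\in\{1,\gamma,\gamma^2,\dots,\gamma^{e-1},0\}$, matching the top branch of Figure~\ref{D1}. I would therefore either state part~2 in this slightly more general form or at least remark on it, since it costs nothing and makes the subsequent case reduction transparent.
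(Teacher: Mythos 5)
Your proposal is correct and follows essentially the same route as the paper: part~1 uses the same witness ${\rm diag}(a,1,\dots,1)$, and your map $\varphi$ is exactly the paper's bijection $A\mapsto {\rm diag}(b,1,\dots,1)A$ written out entrywise, with bijectivity verified via an explicit inverse rather than via nonsingularity of ${\rm diag}(b,1,\dots,1)$. Your closing remark that the argument gives $d_n(R,a)=d_n(R,ua)$ uniformly (so the separate $s=e$ case in the paper is unnecessary) is accurate but a cosmetic difference only.
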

 \begin{proof}    
 	Since  $\det( {\rm diag}(a,1,1,\dots,1))=a$ for all elements  $a\in R$,   it follows that  ${\rm diag}(a,1,1,\dots,1)\in D_n(R,a) $ which implies that  $d_n(R,a) >0$  for all $a\in R$.  This proves~$1)$.

 	To prove $2)$, let $b$ be a unit in $U(R)$ and let $0\leq s \leq e$ be an integer. If $s=e$, then $ \gamma^s=0=\gamma^sb$.  Clearly,  $d_n(R,\gamma^s)=d_n(R,0) =d_n(R,b\gamma^s)$.

 	For each $0\leq s<e$, let $\alpha: D_n(R,\gamma^s)\to D_n(R,b\gamma^s)$  be a map defined by 
 	\[\alpha(A)=  {\rm diag}(b,1,1,\dots,1) A\]
 	for all $A\in D_n(R,\gamma^s)$.   Since  $ {\rm diag}(b,1,1,\dots,1)$ is nonsingular and  $\det(A)= \gamma^s$ if  and only if $\det(   {\rm diag}(b,1,1,\dots,1)A)=b\det(A) =b\gamma^s$  for all  $A\in D_n(R,\gamma^s) $,        $\alpha$ is a  well-defined   bijective map.  Therefore, 
 	\[d_n(R,\gamma^s)=|D_n(R,\gamma^s)|=|D_n(R,b\gamma^s)|=d_n(R,b\gamma^s).\]
 	The second statement is proved. 
 \end{proof}

 By setting $s=0$,    the next corollary follows. 
 \begin{corollary} \label{a1D} Let $R$ be a CFCR and let $n$ be a positive integer.  Then  $d_n(R,a)=d_n(R,1)$ for all units $a\in U(R)$.
 \end{corollary}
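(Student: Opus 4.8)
The plan is to read the corollary off directly from part~$2)$ of Theorem~\ref{ar=rD} by specializing to $s=0$. Since $R$ is a CFCR with $1\neq 0$, its nilpotency index satisfies $e\geq 1$, so $s=0$ lies in the range $0\leq s<e$ handled in the proof of that theorem, and $\gamma^{0}$ is the identity $1$ of $R$. Hence Theorem~\ref{ar=rD}$(2)$ yields $d_n(R,1)=d_n(R,b\gamma^{0})=d_n(R,b)$ for every unit $b\in U(R)$.

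It then remains only to note that an arbitrary unit $a\in U(R)$ is itself of the form $b\gamma^{0}$ with $b=a\in U(R)$, so applying the displayed identity with this choice of $b$ gives $d_n(R,a)=d_n(R,1)$. There is no genuine obstacle here; the sole bookkeeping point is that $s=0$ is an admissible index, which is immediate from $e\geq 1$. Alternatively, one can spell out the one-line self-contained argument: the map $A\mapsto {\rm diag}(a,1,1,\dots,1)A$ is a bijection from $D_n(R,1)$ onto $D_n(R,a)$ because ${\rm diag}(a,1,1,\dots,1)$ is nonsingular (as $a$ is a unit, by Lemma~\ref{lem:propCR}) and scales determinants by $a$; this is precisely the $s=0$ specialization of the bijection $\alpha$ constructed in the proof of Theorem~\ref{ar=rD}.
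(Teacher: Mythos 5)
Your proposal is correct and follows exactly the paper's route: the corollary is obtained by setting $s=0$ in Theorem~\ref{ar=rD}$(2)$, so that $\gamma^{0}=1$ and every unit $a$ appears as $b\gamma^{0}$ with $b=a$. The optional self-contained bijection you sketch is the same map $\alpha$ used in the paper's proof of that theorem, so nothing genuinely new is introduced.
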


 \subsection{Determinants of Nonsingular Diagonal Matrices over CFCRs}
 
 Nonsingular $n\times n$  diagonal matrices over CFCRs $R$ are focused on and the number $d_n(R,a)$ is determined for all positive integers $n$ and for all units $a\in U(R)$. 
 
 Let $NSD_n(R) =\{ A\in D_n(R) \mid \det(A) \in U(R)\}$ denote the set of  nonsingular $n\times n$  diagonal matrices over CFCRs $R$.  Clearly, \[NSD_n(R) =\bigcup_{a\in U(R)} D_n(R,a),\] where the union is disjoint. 
 
 First, we determined the number $|NSD_n(R)|$ of  nonsingular $n\times n$  diagonal matrices over CFCRs $R$. 
 \begin{lemma} \label{NSD}
 	Let $R$ be a CFCR of nilpotency index $e$ and residue field $\mathbb{F}_q$ and let $n$ be a positive integer. Then  \[|NSD_n(R)|= (q-1) ^nq^{(e-1)n}.\]
 \end{lemma}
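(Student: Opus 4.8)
The plan is to reduce the statement to a simple counting of tuples of units, using the fact that in a CFCR the non-units form an ideal. First I would recall, as noted right after Lemma~\ref{lem:propCR}, that $R = \{0\} \cup Z(R) \cup U(R)$ is a disjoint union and that $Z(R) \cup \{0\} = \gamma R$ is the unique maximal ideal. The key elementary observation is then that a product $a_1 a_2 \cdots a_n$ of elements of $R$ lies in $U(R)$ if and only if every $a_i$ lies in $U(R)$: if some $a_i \in \gamma R$, then the whole product lies in the ideal $\gamma R$ and hence is a non-unit, while conversely a product of units is a unit since $U(R)$ is closed under multiplication.

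Next I would apply this to the determinant formula for diagonal matrices. Since $\det({\rm diag}(a_1, a_2, \dots, a_n)) = \prod_{i=1}^n a_i$, the matrix ${\rm diag}(a_1,\dots,a_n)$ is nonsingular (that is, has unit determinant) if and only if $a_i \in U(R)$ for all $i$. Therefore
\[
NSD_n(R) = \{ {\rm diag}(a_1, a_2, \dots, a_n) \mid a_i \in U(R) \text{ for all } 1 \le i \le n\},
\]
and the assignment ${\rm diag}(a_1,\dots,a_n) \mapsto (a_1,\dots,a_n)$ is a bijection from $NSD_n(R)$ onto $U(R)^n$. Hence $|NSD_n(R)| = |U(R)|^n$.

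Finally I would invoke Lemma~\ref{lem:propCR}, part $5)$, which gives $|U(R)| = (q-1)q^{e-1}$, so that
\[
|NSD_n(R)| = \bigl((q-1)q^{e-1}\bigr)^n = (q-1)^n q^{(e-1)n},
\]
as claimed. There is essentially no hard step here; the only point requiring a word of justification is the claim that a product is a unit precisely when all factors are units, and that is immediate from the fact that the non-units of a CFCR form an ideal. One could alternatively phrase the whole argument as $NSD_n(R) = \bigcup_{a \in U(R)} D_n(R,a)$ (a disjoint union, as already observed in the excerpt) together with the independent reduction $d_n(R,a) = d_n(R,1)$ from Corollary~\ref{a1D}, but the direct count via $U(R)^n$ is the cleanest route.
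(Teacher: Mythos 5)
Your proof is correct and follows exactly the paper's argument: the determinant of a diagonal matrix is a unit if and only if all diagonal entries are units, so $|NSD_n(R)|=|U(R)|^n=(q-1)^nq^{(e-1)n}$ by Lemma~\ref{lem:propCR}. You merely supply slightly more justification (that non-units form an ideal) than the paper, which states the equivalence without proof.
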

 \begin{proof}
 	Since  $\det({\rm diag} (a_1,a_2,\dots,a_n) )\in U(R)$ if and only if $a_i\in U(R)$ for all $1\leq i\leq n$,  we have $|NSD_n(R)|= |U(R)|^n= (q-1) ^nq^{(e-1)n}$ by Lemma~\ref{lem:propCR}.
 \end{proof}

 By Corollary~\ref{a1D},     $d_n(R,a) =d_n(R,1)$ for a unit  $a\in R$, and hence, $d_n(R,1)=|NSD_n(R)|/|U(R)|$.  By Lemma~\ref{lem:propCR} and Lemma~\ref{NSD}, the result follows.

 \begin{corollary}\label{a=1D} Let $R$ be a CFCR of nilpotency index $e$ and residue field $\mathbb{F}_q$ and let $n$ be a positive integer. Then  \[d_n(R,a)=d_n(R,1)=  (q-1) ^{(n-1)}q^{(e-1)(n-1)} \]  
 	for all $a\in U(R)$.
 \end{corollary}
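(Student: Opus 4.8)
The plan is to obtain the closed form by a simple fibering argument on the set $NSD_n(R)$ of nonsingular $n\times n$ diagonal matrices over $R$. First I would invoke Corollary~\ref{a1D}, which already supplies the first equality in the statement: $d_n(R,a)=d_n(R,1)$ for every unit $a\in U(R)$. Hence only the explicit value of $d_n(R,1)$ remains to be pinned down.

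Next I would use the disjoint decomposition $NSD_n(R)=\bigcup_{a\in U(R)}D_n(R,a)$ recorded just above (the union is disjoint because determinants are well defined, and it is exhaustive because $\det(\mathrm{diag}(a_1,\dots,a_n))=\prod_{i=1}^n a_i$ lies in $U(R)$ if and only if each $a_i$ lies in $U(R)$). Since, by Corollary~\ref{a1D}, every block $D_n(R,a)$ with $a\in U(R)$ has the same cardinality $d_n(R,1)$, summing over the $|U(R)|$ blocks gives $|NSD_n(R)|=|U(R)|\cdot d_n(R,1)$, so that $d_n(R,1)=|NSD_n(R)|/|U(R)|$.

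Finally I would substitute the two known counts: $|NSD_n(R)|=(q-1)^n q^{(e-1)n}$ from Lemma~\ref{NSD}, and $|U(R)|=(q-1)q^{e-1}$ from part~$5)$ of Lemma~\ref{lem:propCR}. The quotient simplifies to $(q-1)^{n-1}q^{(e-1)(n-1)}$, which is the claimed formula. The argument is routine; the only step that genuinely needs the earlier work is the equal-cardinality of the blocks $D_n(R,a)$, $a\in U(R)$, which is exactly Corollary~\ref{a1D} (itself resting on the explicit bijection constructed in Theorem~\ref{ar=rD}). Accordingly I do not expect a real obstacle here — the substantive content was already established in the preceding lemmas.
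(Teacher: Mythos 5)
Your proposal is correct and follows exactly the paper's own route: the paper also derives $d_n(R,1)=|NSD_n(R)|/|U(R)|$ from Corollary~\ref{a1D} and the disjoint decomposition of $NSD_n(R)$, then substitutes the counts from Lemma~\ref{NSD} and Lemma~\ref{lem:propCR}. No gaps.
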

 By setting $e=1$ in Lemma Lemma~\ref{NSD} and Corollary~\ref{a=1D}, it follows that 
 \begin{align} \label{dn1F}
 |NSD_n(\mathbb{F}_q)| = (q-1)^{n} \text{ ~  and ~ } d_n(\mathbb{F}_q,1)=(q-1)^{n-1}.
 \end{align}
 
 \subsection{Determinants of Singular Diagonal Matrices over CFCRs} 
 Determinants of singular $n\times n$ diagonal matrices over CFCRs $R$ are studied. Precisely, $d_n(R,0)$ and $d_n(R,a)$ are determined for all zero-divisors $a\in Z(R)$.  
 
 \subsubsection{Singular Diagonal Matrices over CFCRs with Zero Determinant}

 The recursive relation  in the  following lemma  is key in  determining    $d_n(R,0)$ in Theorem~\ref{thma0D}.

 \begin{lemma}\label{lema0}
 	Let $R$ be a CFCR of nilpotency index $e$ and residue field $\mathbb{F}_q$ and let $\gamma$  be a generator of the maximal ideal of $R$.  Then \begin{align*} d_n(R,0)&= (q-1)q^{e-1} d_{n-1}(R,0)  +q^{n-1}d_n(R/\gamma^{e-1}R,0+\gamma^{e-1}R)\end{align*} for all integers $n\geq 2$.
 \end{lemma}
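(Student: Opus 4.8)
The plan is to count $D_n(R,0)$ by conditioning on the top-left entry $a_1$ of a diagonal matrix $A={\rm diag}(a_1,a_2,\dots,a_n)$. Since the maximal ideal $\gamma R$ of $R$ is precisely $R\setminus U(R)$, the set $D_n(R,0)$ is the disjoint union of $D'=\{A\in D_n(R,0):a_1\in U(R)\}$ and $D''=\{A\in D_n(R,0):a_1\in\gamma R\}$, and I would identify $|D'|$ with the first term and $|D''|$ with the second term of the asserted recursion.

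Counting $D'$ is routine: when $a_1$ is a unit, $a_1a_2\cdots a_n=0$ if and only if $a_2\cdots a_n=0$, so $A\mapsto\bigl(a_1,{\rm diag}(a_2,\dots,a_n)\bigr)$ is a bijection from $D'$ onto $U(R)\times D_{n-1}(R,0)$; by Lemma~\ref{lem:propCR} this gives $|D'|=|U(R)|\,d_{n-1}(R,0)=(q-1)q^{e-1}d_{n-1}(R,0)$.

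The substance of the proof is the count of $D''$, which I would carry out by producing a surjection onto $D_n\bigl(R/\gamma^{e-1}R,\,0+\gamma^{e-1}R\bigr)$ all of whose fibers have size $q^{n-1}$. The single ring-theoretic fact needed is that $\operatorname{Ann}_R(\gamma)=\gamma^{e-1}R$, which is immediate from $\gamma^e=0$ and the representation of elements of $R$ as $\gamma^s\cdot(\text{unit})$ provided by Lemma~\ref{lem:propCR}. This fact has two consequences: multiplication by $\gamma$ induces a bijection $R/\gamma^{e-1}R\xrightarrow{\ \sim\ }\gamma R$, so an element $a_1\in\gamma R$ is the same datum as a class $c_1\in R/\gamma^{e-1}R$ with $a_1=\gamma c_1$; and, for any $x\in R$, one has $\gamma x=0$ in $R$ exactly when the image $\overline{x}$ of $x$ in $R/\gamma^{e-1}R$ is zero. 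Using these, I would define $\Phi\colon D''\to D_n(R/\gamma^{e-1}R)$ by $\Phi\bigl({\rm diag}(\gamma c_1,a_2,\dots,a_n)\bigr)={\rm diag}(c_1,\overline{a_2},\dots,\overline{a_n})$. Writing $\det A=\gamma\widetilde{c_1}a_2\cdots a_n$ for a lift $\widetilde{c_1}\in R$ of $c_1$ and applying the second consequence to $x=\widetilde{c_1}a_2\cdots a_n$, the condition $\det A=0$ translates exactly into $c_1\overline{a_2}\cdots\overline{a_n}=0$ in $R/\gamma^{e-1}R$; this shows simultaneously that $\Phi$ is well defined with image in $D_n(R/\gamma^{e-1}R,0+\gamma^{e-1}R)$, that $\Phi$ is surjective onto that set, and — reading the equivalence backwards — that any ${\rm diag}(\gamma c_1,a_2,\dots,a_n)$ with $c_1$ and the reductions $\overline{a_i}$ prescribed automatically has determinant $0$. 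Consequently the fiber of $\Phi$ over ${\rm diag}(c_1,\overline{a_2},\dots,\overline{a_n})$ consists of all matrices obtained by keeping $\gamma c_1$ fixed and choosing, for each $2\le i\le n$, one of the $|\gamma^{e-1}R|=q$ lifts of $\overline{a_i}$, so it has exactly $q^{n-1}$ elements; hence $|D''|=q^{n-1}d_n(R/\gamma^{e-1}R,0+\gamma^{e-1}R)$. Adding $|D'|$ and $|D''|$ gives the recursion, and I expect the only delicate point to be the clean transfer of the vanishing-determinant condition across the quotient, which is exactly where $\operatorname{Ann}_R(\gamma)=\gamma^{e-1}R$ enters; everything else is bookkeeping.
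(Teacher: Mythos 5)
Your proposal is correct and follows essentially the same route as the paper: the same split of $D_n(R,0)$ according to whether $a_1$ is a unit, the same $(q-1)q^{e-1}$-to-one projection for the unit case, and for the non-unit case the same map to $D_n(R/\gamma^{e-1}R,0+\gamma^{e-1}R)$ with fibers of size $q^{n-1}$ --- the paper realizes your $\Phi$ as a composite $\beta\circ\psi$ through a choice of representatives $b_1$ with $a_1=\gamma b_1$, which is the same thing as your identification $\gamma R\cong R/\gamma^{e-1}R$ via $\operatorname{Ann}_R(\gamma)=\gamma^{e-1}R$. Your explicit justification that the determinant condition descends to the quotient (so every fiber has exactly $q^{n-1}$ elements) is a point the paper asserts rather than argues, so your write-up is if anything slightly more complete.
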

 
 \begin{proof}
 	Let   \[D^\prime_n(R,0)=\{ {\rm diag}(a_1,a_2,\dots, a_n)\in D_n(R,0)\mid   a_{1}\in U(R) \}\] and  \[D^{\prime\prime}_n(R,0)=\{{\rm diag}(a_1,a_2,\dots, a_n)\in D_n(R,0) \mid  a_{1}\notin U(R)  \}.\]

 	Let $\nu: \rho(D^\prime_n(R,0) )\to D_{n-1}(R,0)$ be the map defined by 
 	\[{\rm diag}(a_1,a_2,\dots, a_n)\in D_n(R,0) \mapsto {\rm diag}(a_2,\dots, a_n)\in D_n(R,0). \]
 	It is not difficult to see that  $\nu$ is a   $(q-1)q^{e-1} $-to-one surjective  map. 
 	For each $A\in D^\prime_n(R,0)$,   it can be seen that  $\det(A)=0$ if and only if $\det(\rho(A))=0$.  Consequently,    \[|D^\prime_n(R,0)|= (q-1)q^{e-1}d_{n-1}(R,0).\]

 	For each ${\rm diag}(a_1,a_2,\dots, a_n) \in D^{\prime\prime}_n(R,0)$,   we have   $a_{1}\in \gamma R$  which implies that     $a_{1}=\gamma b_1$ for some  $b_1\in \sum\limits_{j=0}^{e-2} \gamma^jV$, where $ V$  is given  in    Lemma~\ref{lem:propCR}.   
 	Let   $\psi: D^{\prime\prime}_n(R,0) \to D_n(R)$ be an injective map defined  by 
 	\[{\rm diag}(a_1,a_2,\dots, a_n) \mapsto  {\rm diag}(b_1,a_2,\dots, a_n) .\]
 	Let  $\beta:D_n(R)\to D_n(R/ \gamma^{e-1}R)$ be a surjective ring homomorphism given~by 
 	\[\beta(B)=\overline{  B},\]
 	where  $\overline{[b_{ij}]}:=[b_{ij} +\gamma^{e-1}R]$ for all $[b_{ij}]\in D_n(R)$.  
 	For each $A\in D^{\prime\prime}_n(R,0) $,  $\det(A)=\gamma\det( \psi(A))$ which implies that    $\det(A)=0$  if and only if  $\det(\psi(A))\in \gamma^{e-1} R$. Consequently,   we have 
 	$\det(\beta(\psi(A)))=\det(\psi(A))  +\gamma^{e-1} R = 0 +\gamma^{e-1} R$.  Hence,  $\beta\circ \psi$ is  surjective such that  $\beta(\psi (D^{\prime\prime}_n(R,0) )) = D_n(R/\gamma^{e-1} R,0+\gamma^{e-1} R)$.
 	For each  $D\in D_n(R/\gamma^{e-1} R,0+\gamma^{e-1} R) $,  there are exactly  $  q^{n-1}$ matrices  in  $\psi (D^{\prime\prime}_n(R,0) ) $ whose images under $\beta$ are $D$. Since $\psi$ is an injective map,   we have  \[| D^{\prime\prime}_n(R,0)|= q^{n-1} d_n(R/\gamma^{e-1}R,0+\gamma^{e-1}R).\]
 	
 	Since  $D_n(R,0)=D^\prime_n(R,0)\cup D^{\prime\prime}_n(R,0)$ is a disjoint union, the desired result follows.
 \end{proof}

 %q:=5;
 %e:=1;
 %r:=q^e;
 %n:=4;
 %R:=Integers(r);
 %V:=CartesianPower(R,n);
 %P<x>:=PolynomialRing(GF(q));
 %F:=Factorization(x^n-1);
 %M:=[Degree(F[i][1]): i in [1..#F]];
 %M;
 %count:=0;
 %A:=ZeroMatrix(R,n,n);
 %for v in V do
 %for i:=0 to n-1 do
 %for j:=0 to n-1 do
 %A[i+1][j+1]:= v[((i-j) mod n) +1];
 %end for;
 %end for;
 %//A;"";
 %a:=Determinant(A);
 %if a eq R!0 then 
 %count:=count+1;
 %end if;
 %end for;
 %count;
 %
 %s:=1;
 %sum:=0;
 %for i:=1 to #M do
 %s:=s*(1-q^(-e-sum));
 %sum:=sum+ M[i];
 %end for;
 %
 %s;
 %q^(e*n)*(1-s);
 %
 %s1:=1;
 %for i in Divisors(n) do
 %if i eq 1 then
 %o:=1;
 %else 
 %o:=Modorder(q,i);
 %end if;
 %s1:=s1*(1-q^(1-e-o))^(EulerPhi(i)/o);
 %end for;
 %
 %q^(e*n)*(1-s1);

 Using the above relation, the number $d_n(R,0)$    can be  given in the next theorem.

 \begin{theorem} \label{thma0D} Let $R$ be a CFCR of nilpotency index $e$ and residue field $\mathbb{F}_q$ and let $n$ be a positive integer.  
 	Then \begin{align}\label{eqa0} d_n(R,0)&=q^{ne} -(q-1)^n q^{(e-1)n} \sum_{i=0}^{e-1} \binom{n+i-1}{n-1} q^{-i}.
 	\end{align}
 \end{theorem}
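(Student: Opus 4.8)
The plan is to prove \eqref{eqa0} by induction, feeding the recursion of Lemma~\ref{lema0} with the very closed form it should produce. That recursion expresses $d_n(R,0)$ through $d_{n-1}(R,0)$ over the same ring and through $d_n(R/\gamma^{e-1}R,0+\gamma^{e-1}R)$ over a CFCR of strictly smaller nilpotency index $e-1$ and the same residue field $\mathbb{F}_q$ (Lemma~\ref{lem:propCR}), so I would use an outer induction on the nilpotency index $e$ and, for each fixed $e$, an inner induction on $n$. It is convenient to abbreviate the right-hand side of \eqref{eqa0} by $F(n,e)$ and to set $S(n,e):=\sum_{i=0}^{e-1}\binom{n+i-1}{n-1}q^{-i}$, so that $F(n,e)=q^{ne}-(q-1)^n q^{(e-1)n}S(n,e)$.

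For the base of the outer induction, $e=1$: then $R\cong\mathbb{F}_q$, a diagonal matrix over a field is singular exactly when some diagonal entry vanishes, so $d_n(\mathbb{F}_q,0)=q^n-(q-1)^n$ by \eqref{dn1F}, and this equals $F(n,1)$ since the sum defining $F(n,1)$ has only the $i=0$ term. For the base of each inner induction, $n=1$: the only singular $1\times1$ diagonal matrix is $[0]$, so $d_1(R,0)=1$; on the other hand $S(1,e)=\sum_{i=0}^{e-1}q^{-i}$, and summing this geometric series yields $(q-1)q^{e-1}S(1,e)=q^e-1$, hence $F(1,e)=q^e-(q^e-1)=1$, as needed.

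For the inductive step, fix $e\geq2$ and $n\geq2$. By the inner induction hypothesis $d_{n-1}(R,0)=F(n-1,e)$, and by the outer induction hypothesis applied to the CFCR $R/\gamma^{e-1}R$ of nilpotency index $e-1$ and residue field $\mathbb{F}_q$ one has $d_n(R/\gamma^{e-1}R,0+\gamma^{e-1}R)=F(n,e-1)$. Substituting both into Lemma~\ref{lema0} and sorting the terms, the pure powers of $q$ combine as $(q-1)q^{ne-1}+q^{ne-1}=q^{ne}$, reproducing the leading term of $F(n,e)$; after dividing what remains by the common nonzero factor $(q-1)^n q^{(e-1)n}$, the whole step reduces to the single identity
\[
S(n,e)=S(n-1,e)+q^{-1}S(n,e-1).
\]
I would verify this by re-indexing $q^{-1}S(n,e-1)=\sum_{i=1}^{e-1}\binom{n+i-2}{n-1}q^{-i}$ and noting that for each $i\geq1$ the coefficient of $q^{-i}$ on the right is $\binom{n+i-2}{n-2}+\binom{n+i-2}{n-1}=\binom{n+i-1}{n-1}$ by Pascal's rule, while the constant terms on both sides equal $1$.

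The only real obstacle is the exponent bookkeeping: one must check that the powers $q^{e-1}\cdot q^{(e-1)(n-1)}=q^{(e-1)n}$ and $q^{n-1}\cdot q^{(e-2)n}=q^{(e-1)n-1}$ produced by Lemma~\ref{lema0} mesh with the factor $q^{(e-1)n}$ of $F(n,e)$ so that the recursion collapses to the clean binomial identity above; once this is in order the rest is routine. As a cross-check, and worth recording as a remark, the formula also follows by a direct count: writing each diagonal entry as $\gamma^{s_i}b_i$ with $b_i\in U(R)$, the diagonal matrix has nonzero determinant iff $s_1+\dots+s_n\leq e-1$, so the number of matrices with nonzero determinant is $\sum_{s_1+\dots+s_n\leq e-1}(q-1)^n q^{(e-1)n-(s_1+\dots+s_n)}$; grouping by the value $i$ of $s_1+\dots+s_n$ and using the stars-and-bars count $\binom{n+i-1}{n-1}$ gives $q^{ne}-d_n(R,0)=(q-1)^n q^{(e-1)n}\sum_{i=0}^{e-1}\binom{n+i-1}{n-1}q^{-i}$, which is \eqref{eqa0}.
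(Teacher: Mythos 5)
Your main argument is correct and is essentially the paper's own proof: a double induction (outer on the nilpotency index $e$, inner on $n$) fed by the recursion of Lemma~\ref{lema0}, with the same base cases and with Pascal's rule $\binom{n+i-2}{n-2}+\binom{n+i-2}{n-1}=\binom{n+i-1}{n-1}$ collapsing the inductive step; your exponent bookkeeping checks out, and your organization of the induction hypotheses is in fact cleaner than the paper's. The direct count you append as a ``cross-check'' deserves more credit than you give it: writing each nonzero diagonal entry as $\gamma^{s_i}b_i$, noting that there are exactly $(q-1)q^{e-1-s_i}$ elements of valuation $s_i<e$, observing that the determinant is nonzero iff $s_1+\cdots+s_n\le e-1$, and counting the solutions of $\sum s_i=i$ by stars and bars is a complete, self-contained proof of \eqref{eqa0} that bypasses Lemma~\ref{lema0} and the induction entirely. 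It is shorter and more transparent than the route the paper takes, and it also makes the combinatorial meaning of the binomial coefficient $\binom{n+i-1}{n-1}$ in the formula visible rather than leaving it as an artifact of the algebra.
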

 \begin{proof} We prove  \eqref{eqa0} by   induction on the nilpotency index  $e$ and  the size  $n$.  For the case $e=1$,  we have  $R=\mathbb{F}_q$ and \eqref{eqa0}  holds by \eqref{dn1F},  i.e.,   \[d_n(R,0)=d_n(\mathbb{F}_q,0)=q^n-|NSD_n(\mathbb{F}_q)| = q^{n} - \left(1-q\right)^n .\] If $n=1$, then  $c_n(R,0)=1$   which coincides with \eqref{eqa0}.

 	Assume that \eqref{eqa0} holds true  for all integers $k\in\{1,2,\dots, n-1\}$ and  integers  $f\in\{1,2,\dots,e-1\}$.  Then  
 	\begin{align*}
 	d_k(R,0)= &(q-1)q^{f-1}d_{k-1}(R,0)
 	+q^{k-1}d_k(R/\gamma^{f-1}R,0+\gamma^{f-1}R)  \\ & \quad \text{ by  Lemma~\ref{lema0},}\\
 	= & (q-1)q^{f-1}  \left( q^{(k-1)f} -(q-1)^{k-1} q^{(f-1)(k-1)} \sum_{i=0}^{f-1} \binom{k+i-2}{k-2} q^{-i}\right)  \\
 	&+ q^{k-1} \left(q^{k(f-1)} -(q-1)^k q^{(f-2)k} \sum_{i=0}^{f-2} \binom{k+i-1}{k-1} q^{-i}\right)\\
 	& \quad \text{ by the induction  hypothesis,}\\ 
 	= &  (q-1)q^{kf-1} -  (q-1)^{k} q^{(f-1)k} \sum_{i=0}^{f-1} \binom{k+i-2}{k-2} q^{-i}  \\
 	&+ q^{kf-1} - (q-1)^k q^{(f-1)k-1} \sum_{i=0}^{f-2} \binom{k+i-1}{k-1} q^{-i}\\
 	= &  (q-1)q^{kf-1} -  (q-1)^{k} q^{(f-1)k}   \sum_{i=0}^{f-1} \binom{k+i-2}{k-2} q^{-i}   \\
 	&+ q^{kf-1} - (q-1)^k q^{(f-1)k}  \sum_{i=0}^{f-1} \binom{k+i-2}{k-1} q^{-i} \\
 	= &  q^{kf} -  (q-1)^{k} q^{(f-1)k}   \sum_{i=0}^{f-1} \left(  \binom{k+i-2}{k-2}    + \binom{k+i-2}{k-1}\right) q^{-i-1}\\
 	= &  q^{kf} -      (q-1)^k q^{(f-1)k}
 	\sum_{i=0}^{f-1} \binom{k+i-1}{k-1} q^{-i}.
 	\end{align*}
 	The result   follows immediately.
 \end{proof}

 \subsubsection{Singular Diagonal Matrices over CFCRs with Non-Zero Determinant}
 Here, we focus on singular  diagonal matrices whose determinant is non-zero. Precisely, the number  $d_n(R,a) $  is determined for all  zero-divisors $a\in Z(R)$.  
 Note that for  each zero-divisor $a\in Z(R)$, it can be written as  $a=\gamma^sb$ for some $1\leq s<e$ and $b\in U(R)$. By Theorem~\ref{ar=rD}, it is enough to determine only  $d_n(R,\gamma^s) $  for all integers  $1\leq s<e$. 
 
 The following results   are     key tools in determining  $d_n(R,\gamma^s) $  in Theorem~\ref{a=rD}.

 \begin{lemma}\label{lems} Let $R$ be a CFCR of nilpotency index $e\geq 3$ and residue field $\mathbb{F}_q$ and let  $\gamma$ be  a generator of the maximal ideal of $R$.  Then \[d_n(R,\gamma^s)=q^{(n-1)}d_n(R/\gamma^{e-1}R,\gamma^s+ \gamma^{e-1}R)\] for all $1\leq s<e-1$ and for all integers $n\geq 2$.
 \end{lemma}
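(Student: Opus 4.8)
The plan is to exploit the reduction homomorphism $\pi\colon R\to \bar{R}:=R/\gamma^{e-1}R$ together with the invariance $d_n(R,\gamma^s)=d_n(R,b\gamma^s)$ for all units $b$, established in Theorem~\ref{ar=rD}(2). Write $\bar\gamma:=\pi(\gamma)$, so that $\bar\gamma^{s}$ is exactly the element $\gamma^s+\gamma^{e-1}R$ of $\bar R$ appearing in the statement, and let $\beta\colon D_n(R)\to D_n(\bar R)$ be the map ${\rm diag}(a_1,\dots,a_n)\mapsto {\rm diag}(\pi(a_1),\dots,\pi(a_n))$. Since $\pi$ is a ring epimorphism with kernel $\gamma^{e-1}R$, and $|\gamma^{e-1}R|=q$ by Lemma~\ref{lem:propCR}(3), the map $\beta$ is surjective and every fibre of $\beta$ has exactly $q^n$ elements, a lift of a diagonal matrix being determined by an independent choice of a lift in each of the $n$ diagonal entries. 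Hence $|\beta^{-1}(S)|=q^n|S|$ for every $S\subseteq D_n(\bar R)$; in particular
\[|\beta^{-1}\bigl(D_n(\bar R,\bar\gamma^{s})\bigr)|=q^n\, d_n(\bar R,\bar\gamma^{s}).\]

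Next I would evaluate the same quantity a second way by slicing $\beta^{-1}(D_n(\bar R,\bar\gamma^{s}))$ according to the exact determinant over $R$. As the determinant of a diagonal matrix is just the product of its entries, it commutes with $\pi$, so ${\rm diag}(a_1,\dots,a_n)$ lies in $\beta^{-1}(D_n(\bar R,\bar\gamma^{s}))$ if and only if $\prod_i a_i\in\gamma^s+\gamma^{e-1}R$. The coset $\gamma^s+\gamma^{e-1}R$ consists of the $q$ \emph{distinct} elements $\gamma^s+\gamma^{e-1}w$, $w\in V$, with $V$ the representative set of Lemma~\ref{lem:propCR}, since $\gamma^{e-1}w=\gamma^{e-1}w'$ forces $w\equiv w'\pmod{\gamma}$ and hence $w=w'$. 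Therefore
\[\beta^{-1}\bigl(D_n(\bar R,\bar\gamma^{s})\bigr)=\bigsqcup_{w\in V}D_n\bigl(R,\gamma^s+\gamma^{e-1}w\bigr).\]

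The step that uses the hypothesis $1\le s<e-1$ comes now: for each $w\in V$ we factor $\gamma^s+\gamma^{e-1}w=\gamma^s\bigl(1+\gamma^{e-1-s}w\bigr)$, and since $e-1-s\ge 1$ the element $1+\gamma^{e-1-s}w$ is a unit of $R$ by Lemma~\ref{lem:propCR}(4). Theorem~\ref{ar=rD}(2) then gives $d_n(R,\gamma^s+\gamma^{e-1}w)=d_n(R,\gamma^s)$ for every $w\in V$, so counting the disjoint union above yields $|\beta^{-1}(D_n(\bar R,\bar\gamma^{s}))|=\sum_{w\in V}d_n(R,\gamma^s+\gamma^{e-1}w)=q\,d_n(R,\gamma^s)$. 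Equating the two expressions for $|\beta^{-1}(D_n(\bar R,\bar\gamma^{s}))|$ gives $q\,d_n(R,\gamma^s)=q^n\,d_n(\bar R,\bar\gamma^{s})$, which is the asserted identity $d_n(R,\gamma^s)=q^{n-1}d_n(R/\gamma^{e-1}R,\gamma^s+\gamma^{e-1}R)$.

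I do not anticipate a genuine obstacle: the only point that needs care is the observation that every element of the coset $\gamma^s+\gamma^{e-1}R$ is a unit multiple of $\gamma^s$ — which is exactly what $s<e-1$ buys — so that Theorem~\ref{ar=rD}(2) may be invoked to collapse the $q$ counts $d_n(R,\gamma^s+\gamma^{e-1}w)$ to the single value $d_n(R,\gamma^s)$; the hypothesis $e\ge 3$ is only there to make the range $1\le s<e-1$ non-empty. A more hands-on alternative would fix ${\rm diag}(\bar a_1,\dots,\bar a_n)\in D_n(\bar R,\bar\gamma^{s})$, use $\sum_i v(\bar a_i)=s<e-1$ to write each lift as $a_i=u_i\gamma^{v_i}$ with $u_i$ a unit, and count the lifts with $\prod_i a_i=\gamma^s$ directly by reducing to the congruence $\prod_i u_i\equiv 1\pmod{\gamma^{e-s}}$ on the unit parts; this again produces the factor $q^{n-1}$ but is messier than the symmetry argument, so I would prefer the latter.
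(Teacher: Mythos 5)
Your proof is correct and follows essentially the same route as the paper's: both use the reduction homomorphism $\beta\colon D_n(R)\to D_n(R/\gamma^{e-1}R)$, count the preimage of $D_n(R/\gamma^{e-1}R,\gamma^s+\gamma^{e-1}R)$ in two ways via the $q^n$-element fibres, and collapse the $q$ counts $d_n(R,\gamma^s+\gamma^{e-1}w)$ to $q\,d_n(R,\gamma^s)$ by writing $\gamma^s+\gamma^{e-1}w=\gamma^s(1+\gamma^{e-1-s}w)$ with $1+\gamma^{e-1-s}w$ a unit and invoking Theorem~\ref{ar=rD}(2). Your write-up is in fact slightly cleaner in making the disjoint-union decomposition and the distinctness of the coset representatives explicit.
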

 \begin{proof}
 	Let $n\geq 2 $ and $1\leq s<e-1$ be  integers and let   $\beta:D_n(R)\to D_n(R/ \gamma^{e-1}R)$ be a ring homomorphism defined in the proof of  Lemma~\ref{lema0}~by 
 	\[\beta(A)=\overline{A},\]
 	where  $\overline{[a_{ij}]}:=[a_{ij} +\gamma^{e-1}R]$ for all $[a_{ij}]\in D_n(R)$.
 	For each $A\in D_n(R)$,  we have   $\det(\beta(A))=\gamma^s+ \gamma^{e-1}R$ if and only if $\det (A)= \gamma^s+ \gamma^{e-1}b$  for some  $b\in V$, where $V$ is given in Lemma~\ref{lem:propCR}.
 	Since $1\leq e-s-1< e-1$,   the element  $1+\gamma^{e-s-1}b$ is a unit in $U(R)$. Consequently, 
 	\begin{align*}
 	|\{A\in D_n(R)\mid  &\det(A) = \gamma^s+ \gamma^{e-1}b \text{ for some } b\in V\}|\\
 	&= |\{A\in D_n(R)\mid \det(A) = \gamma^s(1+ \gamma^{e-s-1}b)\text{ for some } b\in V\}|\\
 	&= |\{A\in D_n(R)\mid \det(A) = \gamma^s \}|\\
 	&=d_n(R, \gamma^s).
 	\end{align*}	  
 	Equivalently, 
 	\begin{align}\label{eq11}
 	|\{A\in D_n(R)\mid \det(\beta(A)) = \gamma^s+ \gamma^{e-1}R \}|
 	&= |V|c_n(R, \gamma^s)= qd_n(R, \gamma^s).
 	\end{align}	  
 	It is not difficult to see that $\ker (\beta)= D_n(\gamma^{e-1}R) $ and   $|\ker (\beta)| =q^{n}$. Hence,
 	\begin{align}\label{eq22}
 	|\{A\in D_n(R)\mid &\det(\beta(A)) = \gamma^s+ \gamma^{e-1}R  \}|\notag \\
 	&= q^{n}|\{B\in D_n(R/\gamma^{e-1}R)\mid \det(B) = \gamma^s +\gamma^{e-1}R\}|\notag\\
 	&= q^{n}d_n(R/\gamma^{e-1}R, \gamma^s+\gamma^{e-1}R).
 	\end{align}	  
 	From \eqref{eq11} and \eqref{eq22},  it can be concluded that  
 	\[ qd_n(R, \gamma^s) = q^{n}d_n(R/\gamma^{e-1}R, \gamma^s+\gamma^{e-1}R).\] 
 	As desired, we have  \[d_n(R,\gamma^s)=q^{(n-1)}d_n(R/\gamma^{e-1}R,\gamma^s+ \gamma^{e-1}R).\]   The proof is completed.
 \end{proof}
 
 Applying Lemma~\ref{lems} recursively,  the next  corollary follows.
 \begin{corollary}\label{corlast} Let $R$ be a CFCR of nilpotency index $e+f$ and residue field $\mathbb{F}_q$, where $2\leq e$ and $1\leq f$ are integers.
 	Let $\gamma$ be a generator of the maximal ideal of $R$.  Then \[d_n(R,\gamma^s)=q^{f(n-1)}d_n(R/\gamma^eR,\gamma^s+ \gamma^eR)\] for all $1\leq s<e$ and for all positive   integers $n$.
 \end{corollary}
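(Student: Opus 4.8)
The plan is to derive the identity from Lemma~\ref{lems} by iterating that lemma $f$ times, which I would organize as an induction on $f$ with the integer $e$ held fixed. First I would dispose of the case $n=1$ separately, since Lemma~\ref{lems} assumes $n\geq 2$: for $n=1$ one has $d_1(R,\gamma^s)=1=d_1(R/\gamma^eR,\gamma^s+\gamma^eR)$ (there is a unique $1\times 1$ matrix with a prescribed entry) and $q^{f(n-1)}=q^0=1$, so the identity is trivial. From here on I take $n\geq 2$.

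For the base case $f=1$, the ring $R$ has nilpotency index $e+1$, which is $\geq 3$ because $e\geq 2$, and $1\leq s<e=(e+1)-1$; hence Lemma~\ref{lems} applies verbatim and gives $d_n(R,\gamma^s)=q^{n-1}d_n(R/\gamma^{e}R,\gamma^s+\gamma^{e}R)$, which is exactly the claim for $f=1$. For the inductive step, I assume the corollary holds for $f-1$ (so $f\geq 2$) and let $R$ have nilpotency index $e+f\geq 4$. Since $1\leq s<e\leq e+f-1$, Lemma~\ref{lems} gives $d_n(R,\gamma^s)=q^{n-1}\,d_n\bigl(S,\gamma^s+\gamma^{e+f-1}R\bigr)$, where $S:=R/\gamma^{e+f-1}R$. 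By Lemma~\ref{lem:propCR}(6), $S$ is a CFCR of nilpotency index $e+(f-1)$ with residue field $\mathbb{F}_q$ and maximal ideal generated by the image of $\gamma$; applying the induction hypothesis to $S$ (same $e$, with $f$ replaced by $f-1$, and still $1\leq s<e$), and then the third isomorphism theorem --- which identifies $S/\gamma^eS$ with $R/\gamma^eR$ and the corresponding residue of $\gamma^s$ with $\gamma^s+\gamma^eR$ --- I would obtain $d_n\bigl(S,\gamma^s+\gamma^{e+f-1}R\bigr)=q^{(f-1)(n-1)}d_n(R/\gamma^eR,\gamma^s+\gamma^eR)$. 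Multiplying the two factors yields $q^{n-1}\cdot q^{(f-1)(n-1)}=q^{f(n-1)}$, which is the assertion.

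I do not expect a genuine obstacle: the argument is pure bookkeeping on top of Lemma~\ref{lems}. The one thing that needs a moment of care is verifying at each invocation that the hypotheses of Lemma~\ref{lems} hold --- that the current ring's nilpotency index is at least $3$ and that $s$ remains strictly below that index minus $1$ --- and this is automatic because $e\geq 2$ is frozen throughout, every intermediate nilpotency index is at least $e+1$, and $s<e$ always. One should also keep straight which parameter ($e$, not $f$) is fixed during the recursion, and that the composite quotient $R\twoheadrightarrow R/\gamma^{e+f-1}R\twoheadrightarrow \bigl(R/\gamma^{e+f-1}R\bigr)\big/\gamma^e\bigl(R/\gamma^{e+f-1}R\bigr)$ collapses to $R/\gamma^eR$ with $\gamma^s$ mapping to $\gamma^s$.
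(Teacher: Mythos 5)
Your proof is correct and follows the same route as the paper, which simply states that the corollary follows by ``applying Lemma~\ref{lems} recursively''; your induction on $f$ with $e$ fixed is precisely that recursion, written out with the hypothesis checks (nilpotency index at least $3$, $s$ strictly below index minus one) and the separate $n=1$ case that the paper leaves implicit.
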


 The number  $d_n(R,\gamma^s) $   is determined in the next theorem.
 
 \begin{theorem}\label{a=rD} Let $R$ be a CFCR of nilpotency index $e$ and residue field $\mathbb{F}_q$ and let  $\gamma$ be a generator of  the maximal ideal of $R$.  Then \[c_n(R,\gamma^s)=q^{(e-1)(n-1)}   (q-1)^{n-1}  \binom{n+s-1}{n-1}  \]
 	for all  integers $1\leq s<e$ and for all positive integers $n$.
 \end{theorem}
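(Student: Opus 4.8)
The plan is to count directly the $n$-tuples $(a_1,\dots,a_n)\in R^n$ with $\prod_{i=1}^n a_i=\gamma^s$, and to organise the count according to the $\gamma$-adic valuations of the entries. Since $1\le s<e$ forces $\gamma^s\neq 0$, no entry can be zero, so each $a_i$ has a well-defined valuation $t_i:=v(a_i)\in\{0,1,\dots,e-1\}$ and can be written $a_i=\gamma^{t_i}u_i$ with $u_i\in U(R)$, where $u_i$ is well defined modulo $\mathrm{Ann}_R(\gamma^{t_i})=\gamma^{e-t_i}R$.

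The first step is the arithmetic criterion: $\prod_{i=1}^n a_i=\gamma^s$ if and only if $\sum_{i=1}^n t_i=s$ and $\prod_{i=1}^n u_i\equiv 1\pmod{\gamma^{e-s}R}$. Indeed, writing $T=\sum_i t_i$, the product $\gamma^{T}\prod_i u_i$ has valuation $\min(T,e)$, which equals $s$ only if $T=s$ (using $s<e$); and once $T=s$, the identity $\gamma^s\prod_i u_i=\gamma^s$ is equivalent to $\prod_i u_i-1\in\mathrm{Ann}_R(\gamma^s)=\gamma^{e-s}R$, the annihilator identity being a consequence of Lemma~\ref{lem:propCR}. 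Note $T=s$ forces $t_i\le s$ for every $i$, so each $u_i$ is indeed well defined to the precision $\gamma^{e-s}R$ required here.

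The second step fixes a composition $t_1+\dots+t_n=s$ with $t_i\ge 0$ and counts the tuples having exactly these valuations. The key point is a uniformity statement: for each $i$ and each unit $c$ of $R/\gamma^{e-s}R$, the number of $a\in R$ with $v(a)=t_i$ whose unit part reduces to $c$ modulo $\gamma^{e-s}R$ does not depend on $c$ (multiply by a unit of $R$ lifting $c(c')^{-1}$, using surjectivity of $U(R)\to U(R/\gamma^{e-s}R)$), hence equals
\[
\frac{|\gamma^{t_i}R\setminus\gamma^{t_i+1}R|}{|U(R/\gamma^{e-s}R)|}=\frac{q^{e-t_i-1}(q-1)}{(q-1)q^{e-s-1}}=q^{\,s-t_i},
\]
by Lemma~\ref{lem:propCR} applied to $R$ and to the chain ring $R/\gamma^{e-s}R$ of nilpotency index $e-s$. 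Summing over all $(c_1,\dots,c_n)\in U(R/\gamma^{e-s}R)^n$ with $\prod_i c_i=1$, of which there are $|U(R/\gamma^{e-s}R)|^{n-1}=\big((q-1)q^{e-s-1}\big)^{n-1}$, the number of tuples with valuation vector $(t_1,\dots,t_n)$ equals
\[
\big((q-1)q^{e-s-1}\big)^{n-1}\prod_{i=1}^n q^{\,s-t_i}=(q-1)^{n-1}q^{(e-s-1)(n-1)}q^{s(n-1)}=(q-1)^{n-1}q^{(e-1)(n-1)},
\]
which is independent of the chosen composition. Since there are $\binom{n+s-1}{n-1}$ compositions of $s$ into $n$ nonnegative parts, summing over all of them yields $d_n(R,\gamma^s)=q^{(e-1)(n-1)}(q-1)^{n-1}\binom{n+s-1}{n-1}$, as claimed.

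The step I expect to be the main obstacle is the second one: keeping the "unit part modulo $\gamma^{e-s}R$" bookkeeping honest, in particular the annihilator identity $\mathrm{Ann}_R(\gamma^s)=\gamma^{e-s}R$ and the surjectivity underpinning the uniformity claim. A slightly different route, more in line with the tools built in this section, is to first apply Corollary~\ref{corlast} to reduce to the case $s=e-1$: there $\gamma^{e-1}$ generates the minimal nonzero ideal and the condition $\prod_i u_i\equiv 1$ collapses to $\prod_i\overline{u_i}=1$ in $\mathbb{F}_q^{\times}$, so the same count runs with $e-s-1=0$, and the prefactor $q^{f(n-1)}$ of Corollary~\ref{corlast} restores the general $s$.
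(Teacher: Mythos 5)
Your proof is correct, and it takes a genuinely different route from the paper's. You count the tuples $(a_1,\dots,a_n)$ with $\prod a_i=\gamma^s$ directly, stratifying by the $\gamma$-adic valuation vector $(t_1,\dots,t_n)$: the criterion $\sum t_i=s$ together with $\prod u_i\equiv 1\pmod{\gamma^{e-s}R}$ is right (including the annihilator identity $\mathrm{Ann}_R(\gamma^s)=\gamma^{e-s}R$ and the observation that $t_i\le s$ makes each unit part well defined to the needed precision), and the fibre count $q^{s-t_i}$ checks out, since the set of elements of valuation $t_i$ is in bijection with $U(R)/(1+\gamma^{e-t_i}R)$ and the reduction to $U(R)/(1+\gamma^{e-s}R)$ has fibres of size $q^{s-t_i}$. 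The paper instead stays entirely within its homomorphism framework: it uses the reduction map $\mu:D_n(R/\gamma^{s+1}R)\to D_n(R/\gamma^{s}R)$ to derive the relation $q^n d_n(R/\gamma^{s}R,0)=d_n(R/\gamma^{s+1}R,0)+(q-1)\,d_n(R/\gamma^{s+1}R,\gamma^{s})$, combines it with Corollary~\ref{corlast} to pass back to $R$, and then substitutes the closed form of Theorem~\ref{thma0D} and simplifies. Your argument buys self-containedness (it needs only Lemma~\ref{lem:propCR}, not Theorem~\ref{thma0D} or the recursions) and a transparent combinatorial reading of the answer, with $\binom{n+s-1}{n-1}$ appearing as the number of compositions of $s$ into $n$ nonnegative parts and $(q-1)^{n-1}q^{(e-1)(n-1)}$ as the uniform count per composition; the paper's approach buys reuse of machinery already established and a style of argument that transfers to the circulant case later in the paper. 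One cosmetic point: the statement is printed with $c_n(R,\gamma^s)$, but in this section it is the diagonal count $d_n(R,\gamma^s)$ that is meant, as you correctly assumed.
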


 \begin{proof}
 	Let   $1\leq s<e$ be an integer and let $n$ be a positive integer.  Let  $\mu:D_n(R/ \gamma^{s+1}R)\to D_n(R/ \gamma^{s}R)$ be a ring homomorphism defined by 
 	\[\mu(A)=\overline{A},\]
 	where  $\overline{[a_{ij} +\gamma^{s+1}R]}:=[a_{ij} +\gamma^{s}R]$ for all $[a_{ij}+\gamma^{s+1}R]\in D_n(R/ \gamma^{s+1}R)$.  For each $A\in D_n(R/ \gamma^{s+1}R)$, we then have  $\det(\mu(A))=0+\gamma^{s}R $ if and only if $\det(A)=\gamma^{s} b+\gamma^{s+1}R$ for some $b\in V$, where $V$ is given  in Lemma~\ref{lem:propCR}.   Since $|\ker(\mu)|=q^{n}$,  it follows that 
 	\begin{align*}
 	q^{n} d_n&(R/\gamma^{s}R,0+\gamma^{s}R)\\
 	&= |\ker(\mu)|d_n(R/\gamma^{s}R,0+\gamma^{s}R)\\
 	%&=|C_n(R/ \gamma^{s+1}R)|\\
 	&=\sum_{b\in V} d_n(R/\gamma^{s+1}R,\gamma^{s}b+\gamma^{s+1}R)\\
 	&=d_n(R/\gamma^{s+1}R,0+\gamma^{s+1}R) +\sum_{b\in V\setminus \{0\}} d_n(R/\gamma^{s+1}R,\gamma^{s}b+\gamma^{s+1}R)  \\
 	&=d_n(R/\gamma^{s+1}R,0+\gamma^{s+1}R) +(q-1)	d_n(R/\gamma^{s+1}R,\gamma^{s}+\gamma^{s+1}R)
 	\end{align*}
 	by  Theorem~\ref{ar=rD}.   Consequently, 
 	\begin{align} \label{eq1}
 	d_n(&R/\gamma^{s+1}R,\gamma^{s}+\gamma^{s+1}R)\notag  \\
 	&= \frac{1}{q-1}\left(q^{n} d_n(R/\gamma^{s}R,0+\gamma^{s}R)-d_n(R/\gamma^{s+1}R,0+\gamma^{s+1}R)\right).
 	\end{align}
 	By Corollary~\ref{corlast},  it can be deduced that 
 	\begin{align} \label{eq2}
 	d_n(R,\gamma^{s}) &= 	d_n(R/\gamma^{e+1+(s-e-1)}R,\gamma^{s}+\gamma^{e+1+(s-e-1)}R) \notag \\
 	&=q^{(e-s-1)(n-1)}  d_n(R/\gamma^{s+1}R,\gamma^{s}+\gamma^{s+1}R).
 	\end{align}
 	From (\ref{eq1}) and (\ref{eq2}),   we have 
 	\begin{align*}  
 	d_n(R,\gamma^{s})  
 	&=\frac{q^{(e-s-1)(n-1)} }{q-1} \Bigg(q^{n} d_n(R/\gamma^{s}R,0+\gamma^{s}R)  -d_n(R/\gamma^{s+1}R,0+\gamma^{s+1}R)\Bigg).
 	\end{align*}
 	Using Theorem~\ref{thma0D},  it follows that
 	\begin{align*}  
 	d_n(R,\gamma^{s})  
 	=& \frac{q^{(e-s-1)(n-1)} }{q-1} \Bigg(q^{n}  
 	\left( q^{ns} -(q-1)^n q^{(s-1)n} \sum_{i=0}^{s-1} \binom{n+i-1}{n-1} q^{-i}  \right) \\
 	& - \left( q^{n(s+1)} -(q-1)^n q^{sn} \sum_{i=0}^{s} \binom{n+i-1}{n-1} q^{-i}\right)\Bigg)\\
 	=& \frac{q^{(e-s-1)(n-1)} }{q-1} \Bigg( q^{n(s+1)} -(q-1)^n q^{sn} \sum_{i=0}^{s-1} \binom{n+i-1}{n-1} q^{-i}   \\
 	& - q^{n(s+1)} +(q-1)^n q^{sn} \sum_{i=0}^{s} \binom{n+i-1}{n-1} q^{-i}\Bigg)
 	\end{align*}
 	\begin{align*}\quad\quad
 	= &\frac{q^{(e-s-1)(n-1)} }{q-1} \Bigg( (q-1)^n q^{sn}\Bigg( \sum_{i=0}^{s} \binom{n+i-1}{n-1} q^{-i}  \\
 	&-\sum_{i=0}^{s-1} \binom{n+i-1}{n-1} q^{-i}  \Bigg)\Bigg)\\
 	=& \frac{q^{(e-s-1)(n-1)} }{q-1}  (q-1)^n q^{sn}  \binom{n+s-1}{n-1} q^{-s}   \\
 	=& q^{(e-1)(n-1)}   (q-1)^{n-1}  \binom{n+s-1}{n-1}    
 	\end{align*}
 	as desired. 
 \end{proof}

 \section{Circulant Matrices over CFCRs}
 \label{sec:cir}

 Determinant of    circulant matrices  over a CFCR are focused on an the number $c_n(R,a)$ of $n\times n$ circulant matrices  of  determinant  $a$    is studied.
 
 We recalled that $R$ denotes  a CFCR of nilpotency index $e$, residue field $\mathbb{F}_q$, the maximal ideal generated  $\gamma$.  By Lemma~\ref{lem:propCR}, an element  $a\in R$  can be written in the form $a =\gamma^sb$   for some $0\leq s\leq e$ and unit  $b\in U(R)$.  Precisely, $a$ is a unit if $s=0$,  $a =\gamma^sb$ is a zero-divisor if $1\leq s\leq e-1$, and $a=0$ if $s=e$.

 The over view results of the study of   $c_n(R,a)$  are summarized in Figure~\ref{D2}.   The solid  blocks are hold true for all integers $n$ such that $\gcd(n,q)=1$   and the dashed  blocks   hold true for every positive integer $n$  such that $n|(q-1)$. The details and their proofs are given right after.

 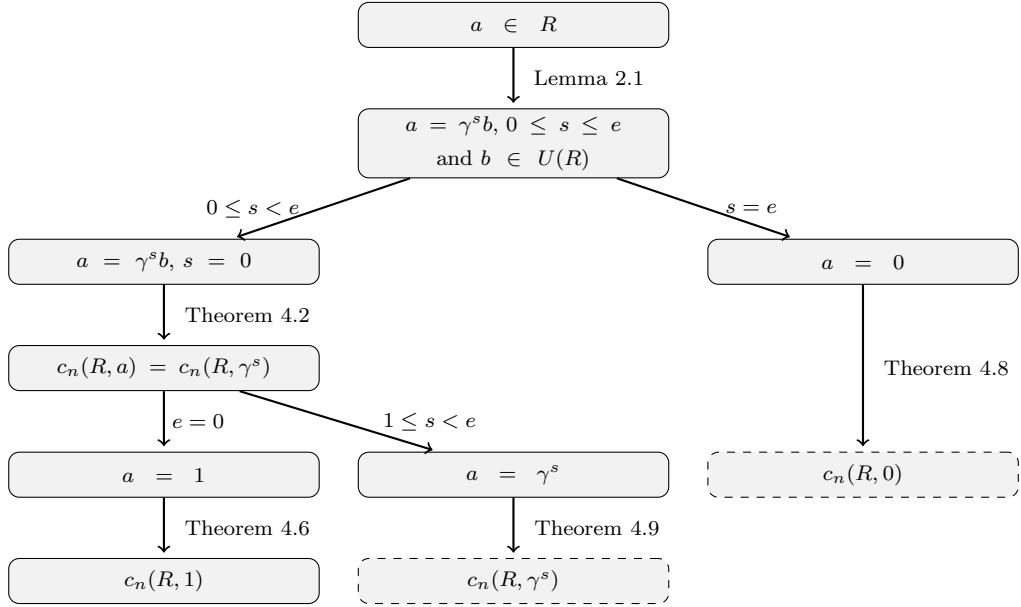
\begin{figure}[!hbt]
 	\centering {\scriptsize
 		\begin{tikzpicture} [
 		incomp/.style={rectangle, draw=black, dashed,
 			fill=black!5, text width=13em, text centered,
 			rounded corners, minimum height=2em},
 		block/.style    = { rectangle, draw=black, 
 			fill=black!5, text width=13em, text centered,
 			rounded corners, minimum height=2em },
 		line/.style     = { draw, thick, ->, shorten >=2pt },
 		]
 		% Define nodes in a matrix
 		\matrix [column sep=5mm, row sep=8  mm] {
 			
 			& \node [block] (oria) {$a \in R$ };            & \\ %  
 			
 			& \node [block] (a) {$a=\gamma^s b$, $0\leq s \leq e$ and $b \in U(R)$ }; \\
 			
 			\node [block] (bri) {$a=\gamma^s b$, $s=0$   }; &      %\node [block] (briz) {$a=\gamma^s b$, $1\leq s <e$   };
 			&\node [block] (a0) {$a =0$ };  \\ 
 			
 			\node [block] (aur1) {$c_n(R,a)=c_n(R,\gamma^s)$ };  && \\
 			
 			\node [block] (au1) {$a=1$ };  	   &\node [block] (rs) {$a=\gamma^s$ };  
 			&	\node [incomp] (a01) {$c_n(R,0)$ };  \\
 			
 			\node [block] (au11) {$c_n(R,1) $ };  	   
 			&\node [incomp] (rs1) {$ c_n(R,\gamma^s)$ }; &	  \\ 	 
 		};
 		% connect all nodes defined above
 		\begin{scope} [every path/.style=line]
 		\path (oria)        --   node{\quad\quad \quad\quad \quad \quad ~ Lemma~\ref{lem:propCR}}   (a); 
 		
 		\path (a)        --   node{$0\leq s <e$\quad\quad\quad \quad \quad  \quad \quad }   (bri); 
 		\path (a)        --  node{\quad \quad \quad \quad $s=e$}  (a0);

 		\path (bri)        --    node{\quad\quad\quad  \quad\quad\quad  ~ ~  Theorem~\ref{ar=r}}(aur1);  
 		
 		\path (aur1)        --    node{\quad\quad\quad $e=0$}(au1);  
 		\path (aur1)        --     node{\quad\quad\quad \quad\quad\quad \quad\quad $1 \leq s<e$}(rs);

 		\path (a0)        --    node{\quad\quad\quad \quad\quad\quad  ~ ~ Theorem~\ref{thma0}}(a01);  
 		
 		\path (au1)        --  node{\quad\quad\quad \quad\quad\quad  ~ ~  Theorem~\ref{a=1}}  (au11);  
 		\path (rs)        --   node{\quad\quad\quad \quad\quad\quad ~ ~  Theorem~\ref{a=r}} (rs1);            
 		\end{scope}
 		%
 		% legend for subprocedures
 		% legend for input and output variables
 		\end{tikzpicture}}
 	\caption{Steps in computing $c_n(R,a)$ over a CFCR  $R$} \label{D2}
 \end{figure}

 A  key relation in the study of    $c_n(R,a)$  is given in Theorem~\ref{ar=r}. First, we prove a useful lemma.

 \begin{lemma}\label{lemOnto}   Let $R$ be a CFCR  and let $n $ be a positive integer.  If   $\gcd(n,q)=1$, then  $  \{ \det(A)  \mid A\in C_n(R)\} =R$. 
 \end{lemma}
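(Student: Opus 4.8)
The plan is to establish the only non-trivial inclusion, namely $R\subseteq\{\det(A)\mid A\in C_n(R)\}$ — the reverse inclusion being automatic, since the entries of $A$ lie in $R$ and $\det(A)$ is a polynomial in them — by exhibiting, for an arbitrary prescribed $c\in R$, one explicit circulant matrix of determinant $c$. The first step is to note that the hypothesis $\gcd(n,q)=1$ forces $n\cdot 1_R$ to be a unit of $R$: its image in the residue field is $n\cdot 1_{\mathbb{F}_q}\neq 0$ because $\operatorname{char}(\mathbb{F}_q)\nmid n$, and by Lemma~\ref{lem:propCR} an element of $R$ is a unit exactly when its reduction modulo $\gamma$ is nonzero. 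I write $n^{-1}$ for its inverse in $R$.

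The second step is the construction. Fix $c\in R$, put $t:=(c-1)\,n^{-1}\in R$, and take
\[
A:={\rm cir}(1+t,\,t,\,t,\dots,t)=I_n+t\,\mathbf{1}\mathbf{1}^{\top},
\]
where $I_n$ is the $n\times n$ identity matrix and $\mathbf{1}=(1,1,\dots,1)^{\top}\in R^{n}$; the two descriptions agree because ${\rm cir}(1+t,t,\dots,t)$ carries $1+t$ in each diagonal position and $t$ in each off-diagonal position, which is exactly $I_n+t\,\mathbf{1}\mathbf{1}^{\top}$. Since $\mathbf{1}\mathbf{1}^{\top}$ has rank one, the matrix determinant lemma $\det(I_n+uv^{\top})=1+v^{\top}u$ gives
\[
\det(A)=1+t\,\mathbf{1}^{\top}\mathbf{1}=1+t\,(n\cdot 1_R)=1+(c-1)=c.
\]
Thus $A\in C_n(R)$ with $\det(A)=c$; since $c$ was an arbitrary element of $R$, this proves $\{\det(A)\mid A\in C_n(R)\}=R$.

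The same matrix also works inside the eigenvalue framework recalled around \eqref{eig}, for readers who prefer to avoid the matrix determinant lemma: because $\gcd(n,q)=1$, there is a Galois extension $R'$ of $R$ whose residue field is $\mathbb{F}_{q^m}$ with $m$ the multiplicative order of $q$ modulo $n$, and then $n\mid q^{m}-1$, so $\mathbb{F}_{q^m}$ has a primitive $n$th root of unity, which Hensel-lifts to a primitive $n$th root of unity $\omega\in R'$. Formula \eqref{eig} then produces $w_0=\sum_{i=1}^{n}a_i=c$ and $w_j=1+t\sum_{i=0}^{n-1}\omega^{ij}=1$ for $1\le j\le n-1$, so $\det(A)=w_0\prod_{j=1}^{n-1}w_j=c$; and passing between $R$ and $R'$ does not change $\det(A)$.

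I do not expect any genuine obstacle here. The single point that deserves a line of justification is that the matrix determinant lemma $\det(I_n+uv^{\top})=1+v^{\top}u$ holds over an arbitrary commutative ring, with no division involved: both sides are polynomials with integer coefficients in the entries of $u$ and $v$, the identity is classical over a field (the matrix $I_n+uv^{\top}$ acts as multiplication by $1+v^{\top}u$ on the line spanned by $u$ and as the identity on the complementary hyperplane $\{x:v^{\top}x=0\}$), and hence it holds identically, in particular over $R$.
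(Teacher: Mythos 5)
Your proposal is correct and follows essentially the same route as the paper: both arguments use $\gcd(n,q)=1$ to invert $n$ in $R$ and then exhibit, for each $c\in R$, the witness ${\rm cir}(1+t,t,\dots,t)$ with $t=(c-1)n^{-1}$, which is exactly the paper's choice $a=(c-(1-n))n^{-1}$, $b=(c-1)n^{-1}$ in its family ${\rm cir}(a,b,\dots,b)$. The only difference is how the determinant of this witness is evaluated --- you use the rank-one update identity $\det(I_n+t\,\mathbf{1}\mathbf{1}^{\top})=1+tn$, while the paper derives $\det({\rm cir}(a,b,\dots,b))=(a-b)^{n-1}(a+(n-1)b)$ by row operations --- and both computations are valid over an arbitrary commutative ring.
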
 
 \begin{proof} Assume that $\gcd(n,q)=1$.   Let $Y=\{ {\rm cir}(a,b,b,\dots,b) \mid a,b\in R\}$.     Using elementary row operations, it is not difficult to see that \[\det( {\rm cir}_n(a,b,b,\dots,b)  )  =(a-b)^{n-1} (a+(n-1)b).\]
 	Let $c\in R$.    Since $\gcd(n,q)=1$, $n $ is invertible in $R$. Choose $a=(c-(1-n))n^{-1}$ and $b=(c-1)n^{-1} $ be elements in $R$. Then   $a-b=(c-(1-n))n^{-1}  -(c-1)n^{-1}=1$,  $a-(n-1)b=  (c-(1-n))n^{-1} +(n-1)(c-1)n^{-1}=c$,  and  hence,
 	\begin{align*}
 	\det( {\rm cir}_n(a,b,b,\dots,b)  )   =(a-b)^{n-1} (a+(n-1)b)   =1^{n-1}c=c.
 	\end{align*}     Since  $Y\subseteq C_n(R)$,  $R\subseteq \{ \det(A)  \mid A\in Y\} \subseteq  \{ \det(A)  \mid A\in C_n(R)\} \subseteq R$.     As desired,  we have $  \{ \det(A)  \mid A\in C_n(R)\} = R$.
 \end{proof}

 \begin{theorem} \label{ar=r} Let $R$ be a CFCR of nilpotency index $e$ and residue field $\mathbb{F}_q$.  Let $\gamma$ be a generator of the maximal ideal of $R$ and let $n$ be a positive integer.  If  $\gcd(n,q)=1$, then  \[c_n(R,b\gamma^s)=c_n(R,\gamma^s)>0\] for all units $b$ in $U(R)$ and for all integers  $0\leq s\leq e$.  
 \end{theorem}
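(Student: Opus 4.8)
The plan is to follow the template of Theorem~\ref{ar=rD}, replacing multiplication by a diagonal matrix with multiplication by a suitable circulant matrix. The structural fact that makes this work is that $C_n(R)$ is a commutative subring of the full matrix ring $M_n(R)$; concretely, $C_n(R)\cong R[x]/(x^n-1)$, with ${\rm cir}(a_1,\dots,a_n)$ corresponding to $a_1+a_2x+\dots+a_nx^{n-1}$, and equivalently $C_n(R)$ is precisely the set of matrices commuting with the cyclic shift matrix $P={\rm cir}(0,1,0,\dots,0)$.

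First I would settle positivity. Since $\gcd(n,q)=1$, Lemma~\ref{lemOnto} gives $\{\det(A)\mid A\in C_n(R)\}=R$, so for every $0\le s\le e$ there is at least one circulant matrix with determinant $\gamma^s$; hence $c_n(R,\gamma^s)>0$ (and similarly $c_n(R,b\gamma^s)>0$). When $s=e$ the claim $c_n(R,b\gamma^s)=c_n(R,\gamma^s)$ is trivial since $b\gamma^e=0=\gamma^e$, so the content is in $0\le s<e$, but the argument below is uniform in $s$.

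Next, fix a unit $b\in U(R)$. Applying Lemma~\ref{lemOnto} with $c=b$ yields a circulant matrix $U\in C_n(R)$ with $\det(U)=b$. Because $b\in U(R)$, $U$ is invertible as a matrix over $R$. The one point that needs a short argument is that $U^{-1}$ is again circulant: either observe that $U$ commutes with $P$, whence $U^{-1}P=U^{-1}PUU^{-1}=U^{-1}UPU^{-1}=PU^{-1}$, so $U^{-1}$ commutes with $P$ and is circulant; or note that under $C_n(R)\cong R[x]/(x^n-1)$ the norm over $R$ of the element corresponding to $U$ equals $\det(U)=b\in U(R)$, so that element is a unit of $R[x]/(x^n-1)$ and its inverse again lies in $C_n(R)$. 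With $U^{-1}\in C_n(R)$, the map $\varphi\colon C_n(R)\to C_n(R)$ given by $\varphi(A)=UA$ is a bijection with two-sided inverse $A\mapsto U^{-1}A$, and $\det(\varphi(A))=\det(U)\det(A)=b\det(A)$ for every $A$. Hence $\varphi$ restricts, for each $s$, to a bijection $C_n(R,\gamma^s)\to C_n(R,b\gamma^s)$, which gives $c_n(R,\gamma^s)=c_n(R,b\gamma^s)$ and completes the proof.

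The main (indeed only non-formal) obstacle is the closure statement just used: that the inverse of a circulant matrix which is invertible over $R$ is itself circulant, equivalently that an element of $R[x]/(x^n-1)$ with unit norm is a unit of that ring. Once this is granted --- via the commutation with $P$, or via the standard fact that in a finite free commutative $R$-algebra an element is a unit if and only if its norm lies in $U(R)$ --- the remainder is bookkeeping formally identical to that of Theorem~\ref{ar=rD}.
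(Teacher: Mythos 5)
Your proposal is correct and follows essentially the same route as the paper: both obtain a circulant matrix of determinant $b$ from Lemma~\ref{lemOnto} and use left multiplication by it to biject $C_n(R,\gamma^s)$ onto $C_n(R,b\gamma^s)$, with positivity coming from the surjectivity of the determinant on $C_n(R)$. Your explicit justification that the inverse of an invertible circulant matrix is again circulant (via commutation with the shift matrix, or the unit-norm argument) is a detail the paper's proof leaves implicit when asserting bijectivity, so your write-up is, if anything, slightly more complete.
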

 \begin{proof}    
 	Let  $0\leq s \leq e$ be an integer and let $b$ be a unit in $U(R)$.  In the case where $s=e$,  we have $ \gamma^s=0=\gamma^sb$. Hence,  $c_n(R,\gamma^s)= c_n(R,0)=c_n(R,b\gamma^s)$.    By Lemma~\ref{lemOnto}, we have $c_n(R,\gamma^s)>0$. 
 	
 	By Lemma~\ref{lemOnto}, there exists a matrix $B \in C_n(R)$ such that $\det(B)=b$. 	For each $0\leq s<e$, let $\alpha: C_n(R,\gamma^s)\to C_n(R,b\gamma^s)$  be a map defined by 
 	\[\alpha(A)=BA\]
 	for all $A\in C_n(R,\gamma^s)$.   Since  $B$ is nonsingular  and    $\det(A)= \gamma^s$ if  and only if $\det(  B A)=\det(  B )\det(A)  =b\gamma^s$ for all  $A\in C_n(R,\gamma^s) $,  $\alpha$ is well-defined  and  bijective.
 	Therefore, we have  \[c_n(R,\gamma^s)=|C_n(R,\gamma^s)|=|C_n(R,b\gamma^s)|=c_n(R,b\gamma^s)\] as 	 desired.
 \end{proof}
 
 By setting  $s=0$,    the following corollary can be obtained immediately. 
 \begin{corollary} \label{a1} Let $R$ be a CFCR  with residue field $\mathbb{F}_q$ and let $n$ be a positive integer such that $\gcd(n,q)=1$. Then  $c_n(R,a)=c_n(R,1)$ for all  $a\in U(R)$.
 \end{corollary}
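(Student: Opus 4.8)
The plan is to obtain this as an immediate specialization of Theorem~\ref{ar=r}. First I would observe that the hypothesis of Theorem~\ref{ar=r} is exactly the hypothesis here: $R$ is a CFCR with residue field $\mathbb{F}_q$, and $n$ is a positive integer with $\gcd(n,q)=1$. Fixing a generator $\gamma$ of the maximal ideal of $R$, Theorem~\ref{ar=r} asserts that $c_n(R,b\gamma^s)=c_n(R,\gamma^s)$ for every unit $b\in U(R)$ and every integer $0\le s\le e$.

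Next I would set $s=0$. Since $\gamma^0=1$, the conclusion of Theorem~\ref{ar=r} reads $c_n(R,b\cdot 1)=c_n(R,1)$, that is, $c_n(R,b)=c_n(R,1)$, for all $b\in U(R)$. Renaming $b$ as $a$ gives precisely $c_n(R,a)=c_n(R,1)$ for all $a\in U(R)$, which is the claim.

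There is essentially no obstacle here: the corollary is a direct reading of the $s=0$ case of Theorem~\ref{ar=r}, and the only thing to note is that an arbitrary unit $a$ is of the form $\gamma^s b$ with $s=0$ and $b=a\in U(R)$, so it already falls under the scope of that theorem. (One could alternatively recall the bijection used in the proof of Theorem~\ref{ar=r}: choosing $B\in C_n(R)$ with $\det(B)=a$, which exists by Lemma~\ref{lemOnto}, the map $A\mapsto BA$ is a bijection from $C_n(R,1)$ onto $C_n(R,a)$, since $B$ is nonsingular and $\det(BA)=\det(B)\det(A)$; but invoking Theorem~\ref{ar=r} directly is cleaner.)
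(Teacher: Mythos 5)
Your proposal is correct and matches the paper exactly: the paper also obtains this corollary immediately by setting $s=0$ in Theorem~\ref{ar=r}, so that $\gamma^0=1$ and the conclusion reads $c_n(R,b)=c_n(R,1)$ for all units $b$.
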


 \subsection{Determinants of Nonsingular Circulant Matrices over CFCRs}

 Nonsingular $n\times n$  circulant matrices   over a CFCR are studied and the number      $c_n(R,a)$  is determined  for all  $a\in U(R)$ and  for all positive integers $n$ such that $\gcd(n,q)=1$.  By Corollary~\ref{a1}, it is enough  to derived  only the number $c_n(R,1)$. 
 
 Let $NSC_n(R) =\{ A\in C_n(R) \mid \det (A) \in U(R) \}$ denote the set of  nonsingular $n\times n$  circulant matrices   over a  CFCR $R$. The number  $ |NSC_n(\mathbb{F}_q)|$ which is key to determine  $c_n(R,1)$ is given in  \cite[Proposition 1]{SSPB2019}  and \cite{FGZN2018} via the ring isomorphism $T: C_n(R) \to R[X]/\langle X^n-1\rangle$  defined by \[{\rm cir} (a_1,a_2,\dots,a_n) \mapsto a_1+a_2X+a_3X^2+\dots+a_nX^{n-1}+ \langle X^n-1\rangle.\]

 \begin{lemma}[{\cite[Proposition 1]{SSPB2019}  and \cite{FGZN2018}}]\label{GLnF}  Let $q$ be a prime power and let $n$ be a positive integer  such that $\gcd(n,q)=1$. Then 
 	\[|NSC_n(\mathbb{F}_q)|
 	= \prod_{d|n} \left(q^{{\rm ord}_d(q)}-1\right)^{\frac{\phi(d)}{{\rm ord}_d(q)}} =  q^n\prod_{d|n} \left(1-q^{-{\rm ord}_d(q)}\right)^{\frac{\phi(d)}{{\rm ord}_d(q)}}. \]
 	In general,  we have 
 	\[|NSC_{np^k}(\mathbb{F}_q)| 
 	=  q^{np^k}\prod_{d|n} \left(1-q^{-{\rm ord}_d(q)}\right)^{\frac{\phi(d)}{{\rm ord}_d(q)}}\]	
 	where $k\geq 0$ is a positive integer and $p$ is the characteristic of  $\mathbb{F}_q$. 
 \end{lemma}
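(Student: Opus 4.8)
The plan is to transport the problem through the ring isomorphism $T\colon C_n(\mathbb{F}_q)\to \mathbb{F}_q[X]/\langle X^n-1\rangle$ recalled above and then merely count units. Since $T$ is a ring isomorphism, a circulant matrix $A$ is nonsingular if and only if it is a unit of the ring $C_n(\mathbb{F}_q)$ (the inverse of a nonsingular circulant matrix is again circulant), equivalently $T(A)$ is a unit of $\mathbb{F}_q[X]/\langle X^n-1\rangle$; hence $|NSC_n(\mathbb{F}_q)|$ equals the number of units of this quotient ring, and the whole computation reduces to counting units in $\mathbb{F}_q[X]/\langle X^n-1\rangle$.

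For the first formula, where $\gcd(n,q)=1$, I would use that $X^n-1$ is separable over $\mathbb{F}_q$ with factorization $X^n-1=\prod_{d\mid n}\Phi_d(X)$, and that over $\mathbb{F}_q$ each cyclotomic polynomial $\Phi_d$ splits into $\phi(d)/{\rm ord}_d(q)$ pairwise distinct monic irreducible factors, each of degree ${\rm ord}_d(q)$ (the standard description of the irreducible factors of $X^n-1$ via $q$-cyclotomic cosets modulo $n$). By the Chinese Remainder Theorem, $\mathbb{F}_q[X]/\langle X^n-1\rangle$ is then isomorphic to a direct product of finite fields in which $\mathbb{F}_{q^{{\rm ord}_d(q)}}$ occurs with multiplicity $\phi(d)/{\rm ord}_d(q)$ for each $d\mid n$. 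The unit group of a finite direct product of rings is the product of the unit groups, and $\mathbb{F}_{q^m}$ has $q^m-1$ units, so
\[|NSC_n(\mathbb{F}_q)|=\prod_{d\mid n}\left(q^{{\rm ord}_d(q)}-1\right)^{\phi(d)/{\rm ord}_d(q)}.\]
Pulling the factor $q^{{\rm ord}_d(q)}$ out of each bracket contributes $q^{\phi(d)}$, and since $\sum_{d\mid n}\phi(d)=n$ this turns the right-hand side into $q^n\prod_{d\mid n}(1-q^{-{\rm ord}_d(q)})^{\phi(d)/{\rm ord}_d(q)}$, the second displayed form.

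For the general case with $np^k$, where $k\geq 0$ is an integer and $p=\mathrm{char}(\mathbb{F}_q)$, I would write $X^{np^k}-1=(X^n-1)^{p^k}$ (Frobenius) and take the factorization $X^n-1=\prod_{i=1}^{r}f_i$ into pairwise distinct monic irreducibles, say of degrees $t_i$ with $\sum_{i=1}^{r}t_i=n$ (distinctness uses $\gcd(n,q)=1$). The Chinese Remainder Theorem gives $\mathbb{F}_q[X]/\langle X^{np^k}-1\rangle\cong\prod_{i=1}^{r}\mathbb{F}_q[X]/\langle f_i^{p^k}\rangle$, and each factor $\mathbb{F}_q[X]/\langle f_i^{p^k}\rangle$ is a finite local ring with maximal ideal $\langle f_i\rangle$, residue field $\mathbb{F}_{q^{t_i}}$, and $q^{t_ip^k}$ elements, hence with $q^{t_ip^k}-q^{t_i(p^k-1)}=q^{t_i(p^k-1)}(q^{t_i}-1)$ units. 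Multiplying over $i$ and using $\sum_i t_i=n$ yields $|NSC_{np^k}(\mathbb{F}_q)|=q^{n(p^k-1)}\prod_{i=1}^{r}(q^{t_i}-1)=q^{n(p^k-1)}|NSC_n(\mathbb{F}_q)|$; substituting the first formula and collapsing the powers of $q$ (using $n(p^k-1)+n=np^k$) gives $q^{np^k}\prod_{d\mid n}(1-q^{-{\rm ord}_d(q)})^{\phi(d)/{\rm ord}_d(q)}$.

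The only substantive input beyond bookkeeping is the cyclotomic factorization fact — that $\Phi_d$ over $\mathbb{F}_q$ is a product of $\phi(d)/{\rm ord}_d(q)$ irreducibles each of degree ${\rm ord}_d(q)$ — which is classical (a primitive $d$-th root of unity has minimal polynomial of degree ${\rm ord}_d(q)$ over $\mathbb{F}_q$, since it generates $\mathbb{F}_{q^{{\rm ord}_d(q)}}$), so I would simply cite it; everything else is an application of the Chinese Remainder Theorem together with the count of units in finite fields and in finite local rings.
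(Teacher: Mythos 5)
The paper offers no proof of this lemma---it is imported from \cite[Proposition 1]{SSPB2019} and \cite{FGZN2018}, with only the remark that the count is obtained via the ring isomorphism $T\colon C_n(\mathbb{F}_q)\to\mathbb{F}_q[X]/\langle X^n-1\rangle$. Your argument is correct and supplies exactly the standard proof behind that citation: nonsingular circulant matrices are the units of $C_n(\mathbb{F}_q)$ (the inverse of a nonsingular circulant is again circulant, e.g.\ by Cayley--Hamilton applied to a polynomial in the cyclic shift matrix), the Chinese Remainder Theorem applied to the cyclotomic-coset factorization of $X^n-1$ yields the product of finite fields in the case $\gcd(n,q)=1$, and the identity $X^{np^k}-1=(X^n-1)^{p^k}$ reduces the general case to counting units in the finite local rings $\mathbb{F}_q[X]/\langle f_i^{p^k}\rangle$; every step, including the bookkeeping with $\sum_{d\mid n}\phi(d)=n$ and $n(p^k-1)+n=np^k$, checks out.
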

 %\begin{proof} The proof will be provided for completeness.
 %	\end{proof}

 The cardinality of  $NSC_n(R)$ is given in the following lemma.

 \begin{lemma} \label{GLnR} Let $R$ be a CFCR of nilpotency index $e$ and residue field $\mathbb{F}_q$ and let $n$ be a positive integer such that $\gcd(n,q)=1$.  Then \[|NSC_n(R)|  = q^{en} \prod_{d|n} \left(1-q^{-{\rm ord}_d(q)}\right)^{\frac{\phi(d)}{{\rm ord}_d(q)}}.\]
 \end{lemma}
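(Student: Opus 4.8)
The plan is to relate $NSC_n(R)$ to $NSC_n(\mathbb{F}_q)$ via reduction modulo the maximal ideal $\gamma R$ and then invoke Lemma~\ref{GLnF}. Consider the map $\pi\colon C_n(R)\to C_n(\mathbb{F}_q)$ sending ${\rm cir}(a_1,\dots,a_n)$ to ${\rm cir}(\bar a_1,\dots,\bar a_n)$, where $\bar a_i=a_i+\gamma R$. This is a surjective ring homomorphism (it is the restriction to circulant matrices of the entrywise reduction $M_n(R)\to M_n(\mathbb{F}_q)$), and its kernel is $C_n(\gamma R)=\{{\rm cir}(a_1,\dots,a_n)\mid a_i\in\gamma R\}$, which has cardinality $|\gamma R|^{n}=q^{(e-1)n}$ by part $3)$ of Lemma~\ref{lem:propCR}. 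Since $\pi$ is in particular a homomorphism of additive groups, every nonempty fiber of $\pi$ has size exactly $q^{(e-1)n}$.

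Next I would show that $A\in C_n(R)$ is nonsingular if and only if $\pi(A)$ is nonsingular over $\mathbb{F}_q$. Because $\pi$ is a ring homomorphism, $\overline{\det(A)}=\det(\pi(A))$. Recall from the remarks following Lemma~\ref{lem:propCR} that $R=\{0\}\cup Z(R)\cup U(R)$ is a disjoint union with $Z(R)=\gamma R\setminus\{0\}$, so $U(R)=R\setminus\gamma R$. Hence $\det(A)\in U(R)$ if and only if $\overline{\det(A)}\neq 0$ in $\mathbb{F}_q=R/\gamma R$, i.e. if and only if $\det(\pi(A))\neq 0$, i.e. if and only if $\pi(A)\in NSC_n(\mathbb{F}_q)$. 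Consequently $NSC_n(R)=\pi^{-1}\bigl(NSC_n(\mathbb{F}_q)\bigr)$.

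Combining the two observations, $NSC_n(R)$ is the disjoint union of the $|NSC_n(\mathbb{F}_q)|$ fibers of $\pi$ lying over $NSC_n(\mathbb{F}_q)$, each of size $q^{(e-1)n}$, so $|NSC_n(R)|=q^{(e-1)n}\,|NSC_n(\mathbb{F}_q)|$. Plugging in $|NSC_n(\mathbb{F}_q)|=q^{n}\prod_{d\mid n}\bigl(1-q^{-{\rm ord}_d(q)}\bigr)^{\phi(d)/{\rm ord}_d(q)}$ from Lemma~\ref{GLnF} (valid since $\gcd(n,q)=1$) gives
\[
|NSC_n(R)|=q^{(e-1)n}\cdot q^{n}\prod_{d\mid n}\bigl(1-q^{-{\rm ord}_d(q)}\bigr)^{\phi(d)/{\rm ord}_d(q)}=q^{en}\prod_{d\mid n}\bigl(1-q^{-{\rm ord}_d(q)}\bigr)^{\phi(d)/{\rm ord}_d(q)},
\]
as claimed. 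I do not anticipate a real obstacle; the only step that requires any care is the equivalence ``nonsingular over $R$ $\iff$ nonsingular over $\mathbb{F}_q$'', and this is immediate from the local structure of $R$ recalled in Section~\ref{sec:pre}. (Alternatively, one could bypass $\pi$ and argue through the isomorphism $T\colon C_n(R)\cong R[X]/\langle X^n-1\rangle$: since $\gcd(n,q)=1$, Hensel lifting factors $X^n-1$ over $R$ into pairwise coprime basic irreducibles whose degrees are the ${\rm ord}_d(q)$, $d\mid n$, with multiplicities $\phi(d)/{\rm ord}_d(q)$; by the Chinese Remainder Theorem $R[X]/\langle X^n-1\rangle$ is a product of chain rings of nilpotency index $e$ with residue fields $\mathbb{F}_{q^{{\rm ord}_d(q)}}$, and counting units factorwise via part $5)$ of Lemma~\ref{lem:propCR} yields the same formula, though with heavier bookkeeping.)
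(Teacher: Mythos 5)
Your proof is correct and follows essentially the same strategy as the paper: count $NSC_n(R)$ by fibering over $NSC_n$ of a quotient via a reduction homomorphism, using that invertibility is detected on the residue, and then apply Lemma~\ref{GLnF}. The only cosmetic difference is that you reduce modulo $\gamma R$ in a single step (kernel of size $q^{(e-1)n}$), whereas the paper reduces modulo $\gamma^{e-1}R$ and iterates $e-1$ times, picking up a factor $q^{n}$ at each stage; both yield $|NSC_n(R)|=q^{(e-1)n}|NSC_n(\mathbb{F}_q)|$.
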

 \begin{proof} For $e=1$,  we have $R=\mathbb{F}_q$ and   \[|NSC_n(R)|=|NSC_n(\mathbb{F}_q)|= q^n\prod_{d|n} \left(1-q^{-{\rm ord}_d(q)}\right)^{\frac{\phi(d)}{{\rm ord}_d(q)}} \]  by Lemma~\ref{GLnF}.

 	Assume that $e\geq 2$.
 	Let $\gamma$ be a generator of the maximal ideal of $R$  and let  $\beta:C_n(R)\to C_n(R/ \gamma^{e-1}R)$ be a ring homomorphism defined by 
 	\[\beta(A)=\overline{A},\]
 	where  $\overline{[a_{ij}]}:=[a_{ij} +\gamma^{e-1}R]$ for all $[a_{ij}]\in C_n(R)$.   
 	Then $A\in \ker (\beta) $ if and only if the entries of $A$ are in $\gamma^{e-1}R$. Or equivalently,  $A\in C_n(\gamma^{e-1}R)$.  By Lemma~\ref{lem:propCR},  $|\gamma^{e-1}R|=q$ which implies that $|\ker (\beta)|= |\gamma^{e-1}R|^{n}=q^{n}$.  By the $1$st Isomorphism Theorem, it follows that 
 	\begin{align*}
 	|C_n(R)|&=|\ker(\beta)||C_n(R/ \gamma^{e-1}R)|=q^{n}|C_n(R/ \gamma^{e-1}R)|.
 	\end{align*}
 	For each $B\in C_n(R/ \gamma^{e-1}R)$, we have $\beta^{-1}(B)=\{A+\ker(\beta)\}$, where  $A\in C_n(R)$ is such that $\beta(A)=B$. 
 	Note that   $A\in C_n(R)$ is invertible if and only if $\beta(A)$ is a unit  in $M_n(R/ \gamma^{e-1}R)$.  For each $B\in NSC_n(R/ \gamma^{e-1}R)$, we  therefore  have \[\beta^{-1}(B)\subseteq NSC_n(R) \text{ and }  |\beta^{-1}(B)|=|\ker(\beta)|.\]
 	It follows that   \begin{align*}
 	|NSC_n(R)|&=|\ker(\beta)||NSC_n(R/ \gamma^{e-1}R)|=q^{n}|NSC_n(R/ \gamma^{e-1}R)|.
 	\end{align*}
 	Continue this process,  it can be concluded that 
 	\begin{align*}
 	|NSC_n(R)|
 	&=q^{n}|NSC_n(R/ \gamma^{e-1}R)|\\
 	&=q^{n}q^{n}|NSC_n(R/ \gamma^{e-2}R)|\\
 	&\ \ \vdots\\
 	&=q^{(e-1)n} |NSC_n(R/ \gamma R)|\\ 
 	&=q^{(e-1)n} |NSC_n(\mathbb{F}_q)|.
 	\end{align*}
 	Since $|NSC_n(\mathbb{F}_q)|  = q^n\prod\limits _{d|n} \left(1-q^{-{\rm ord}_d(q)}\right)^{\frac{\phi(d)}{{\rm ord}_d(q)}}$	by  Lemma~\ref{GLnF},   it follows that   
 	\[|NSC_n(R)| =q^{(e-1)n} |NSC_n(\mathbb{F}_q)|= q^{en} \prod_{d|n} \left(1-q^{-{\rm ord}_d(q)}\right)^{\frac{\phi(d)}{{\rm ord}_d(q)}}\] as desired.
 \end{proof} 
 
 The number $c_n(R,1)$ is  given in the next theorem for all positive integers such that $\gcd(n,q)=1$.
 
 \begin{theorem}\label{a=1} Let $R$ be a CFCR of nilpotency index $e$ and residue field $\mathbb{F}_q$ and let $n$ be a positive integer such that $\gcd(n,q)=1$. Then  \[c_n(R,1)=q^{e(n-1)} \prod_{d|n,d\ne 1} \left(1-q^{-{\rm ord}_d(q)}\right)^{\frac{\phi(d)}{{\rm ord}_d(q)}}.\]  
 \end{theorem}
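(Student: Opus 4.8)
The plan is to count the set $NSC_n(R)$ of nonsingular $n\times n$ circulant matrices over $R$ in two ways and compare. On the one hand, Lemma~\ref{GLnR} gives $|NSC_n(R)| = q^{en}\prod_{d\mid n}\left(1-q^{-{\rm ord}_d(q)}\right)^{\phi(d)/{\rm ord}_d(q)}$. On the other hand, since $\det(A)\in U(R)$ precisely when $A\in C_n(R,a)$ for some unit $a$, we have the disjoint union $NSC_n(R)=\bigsqcup_{a\in U(R)}C_n(R,a)$, so $|NSC_n(R)|=\sum_{a\in U(R)}c_n(R,a)$. Because $\gcd(n,q)=1$, Corollary~\ref{a1} applies and every summand equals $c_n(R,1)$; combined with $|U(R)|=(q-1)q^{e-1}$ from Lemma~\ref{lem:propCR}, this yields $|NSC_n(R)|=(q-1)q^{e-1}c_n(R,1)$, and hence $c_n(R,1)=|NSC_n(R)|/\big((q-1)q^{e-1}\big)$.

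It then remains to simplify this quotient. First I would isolate the factor of the product indexed by $d=1$: since ${\rm ord}_1(q)=1$ and $\phi(1)=1$, that factor is $1-q^{-1}=(q-1)/q$, so that $\prod_{d\mid n}\left(1-q^{-{\rm ord}_d(q)}\right)^{\phi(d)/{\rm ord}_d(q)} = \frac{q-1}{q}\prod_{d\mid n,\,d\ne 1}\left(1-q^{-{\rm ord}_d(q)}\right)^{\phi(d)/{\rm ord}_d(q)}$. Substituting into the expression for $c_n(R,1)$, the factor $q-1$ cancels and the powers of $q$ combine as $q^{en}\cdot q^{-1}\cdot q^{-(e-1)}=q^{e(n-1)}$, giving exactly $c_n(R,1)=q^{e(n-1)}\prod_{d\mid n,\,d\ne 1}\left(1-q^{-{\rm ord}_d(q)}\right)^{\phi(d)/{\rm ord}_d(q)}$.

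The only genuinely delicate point is the hypothesis $\gcd(n,q)=1$: it is what makes Lemma~\ref{lemOnto} (surjectivity of the determinant map on $C_n(R)$), and hence Corollary~\ref{a1}, available, so that all fibers $C_n(R,a)$ over units $a\in U(R)$ are equinumerous. Without it the two-way count collapses, since the determinant map need not be onto $U(R)$ and the fibers need not have equal size. Everything else is bookkeeping: correctly handling the $d=1$ term of the Euler-type product and the exponent arithmetic. I would also record that the formula is consistent with the case $e=1$, where $(q-1)q^{e-1}=q-1$ and the statement recovers $c_n(\mathbb{F}_q,1)=|NSC_n(\mathbb{F}_q)|/(q-1)$ from Lemma~\ref{GLnF}.
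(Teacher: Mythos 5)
Your proposal is correct and follows essentially the same route as the paper: partition $NSC_n(R)$ into the fibers $C_n(R,a)$ over units $a$, use Corollary~\ref{a1} to see all fibers have size $c_n(R,1)$, divide $|NSC_n(R)|$ from Lemma~\ref{GLnR} by $|U(R)|=(q-1)q^{e-1}$, and absorb the $d=1$ factor $(q-1)/q$ of the product. The algebraic simplification and the role of $\gcd(n,q)=1$ are exactly as in the paper's argument.
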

 \begin{proof}
 	Clearly,   $NSC_n(R)$ is the  disjoint union of    $C_n(R,a)$ for all $a\in U(R)$, i.e.,
 	\[NSC_n(R)=\bigcup_{a\in U(R)} C_n(R,a)\]
 	and $C_n(R,a)\cap C_n(R,b)=\emptyset$ for all $a\ne b$ in $U(R)$.
 	
 	From Corollary~\ref{a1},  we have  $c_n(R,1)=c_n(R,a)=|C_n(R,a)|$  for all units $a\in U(R)$. It follows that 
 	\begin{align*} 
 	|NSC_n(R)|&= \sum_{a\in U(R)}  |C_n(R,a)|
 	=|U(R)|c_n(R,1).
 	\end{align*}
 	By Lemma~\ref{lem:propCR} and Lemma~\ref{GLnR}, we therefore have 
 	\begin{align*} 
 	c_n(R,1)&=\frac{|NSC_n(R)|}  {|U(R)|}   \\
 	&=\frac{q^{en} \prod\limits_{d|n} \left(1-q^{-{\rm ord}_d(q)}\right)^{\frac{\phi(d)}{{\rm ord}_d(q)}}}{(q-1)q^{e-1}}\\
 	&=q^{e(n-1)} \prod_{d|n,d\ne 1} \left(1-q^{-{\rm ord}_d(q)}\right)^{\frac{\phi(d)}{{\rm ord}_d(q)}}
 	\end{align*}
 	as desired.
 \end{proof}
 
 From Corollary~\ref{a1} and Theorem~\ref{a=1}, the  next  corollary   can be deduced   directly. 
 \begin{corollary}
 	Let $R$ be a CFCR of nilpotency index $e$  and residue field $\mathbb{F}_q$ and let $n$ be a positive integer.  Then \[c_n(R,a)=c_n(R,1)=q^{e(n-1)} \prod_{d|n,d\ne 1} \left(1-q^{-{\rm ord}_d(q)}\right)^{\frac{\phi(d)}{{\rm ord}_d(q)}}\] for all units $a$ in $R$.
 \end{corollary}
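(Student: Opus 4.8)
The plan is simply to chain together the two results that immediately precede this corollary, so there is essentially no obstacle to overcome. First I would recall Corollary~\ref{a1}: under the hypothesis $\gcd(n,q)=1$ (which is what makes the ``multiplication by a fixed nonsingular circulant matrix'' bijection of Theorem~\ref{ar=r} available), one has $c_n(R,a)=c_n(R,1)$ for every unit $a\in U(R)$. This reduces the task to evaluating the single quantity $c_n(R,1)$.

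Next I would invoke Theorem~\ref{a=1}, which already supplies the closed form
\[
c_n(R,1)=q^{e(n-1)}\prod_{d\mid n,\,d\ne 1}\left(1-q^{-{\rm ord}_d(q)}\right)^{\frac{\phi(d)}{{\rm ord}_d(q)}}.
\]
Substituting this into the equality $c_n(R,a)=c_n(R,1)$ from the previous paragraph yields the asserted formula for all units $a\in R$, which completes the argument.

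The only point worth flagging is that both ingredients rely on $\gcd(n,q)=1$: without this, Corollary~\ref{a1} is unavailable—the determinant map need no longer be surjective onto $R$, compare Lemma~\ref{lemOnto}—so the statement should be read with that hypothesis in force, consistent with the scope of the solid blocks in Figure~\ref{D2}.
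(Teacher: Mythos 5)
Your proof is correct and follows exactly the paper's own route: the corollary is obtained by combining Corollary~\ref{a1} with Theorem~\ref{a=1}, which is all the paper does. Your remark that the hypothesis $\gcd(n,q)=1$ must be in force is well taken --- the statement as printed omits it, but both ingredients require it, so this is an omission in the paper rather than a gap in your argument.
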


 \subsection{Determinants of Singular Circulant Matrices  over CFCRs}
 
 We focus on singular $n\times n$  circulant  matrices over a CFCR $R$ with residue field $\mathbb{F}_q$ and  determine the  number  $c_n(R,a)$  of such matrices  for all $a\in \gamma R$  in the special case where $n$   is a positive divisor of  $q-1$.

 	Assume that $n$ is a divisor  of $q-1=|V\setminus \{0\}|$.  Then $R$ contains an $n$th root of unity in $V$.  Based on  \eqref{eig} and \cite[Corollary 1]{C1975}, for each $A\in C_n(R)$, $A$ is diagonalizable  and its  eigenvalues  lie in $R$. Then, for each $A\in C_n(R)$,  there exists an invertible matrix $P$ and a diagonal matrix  $D={\rm diag} (w_1,w_2, \dots,w_n)$ such that  $D=PAP^{-1}$, where $w_1,w_2,\dots, w_n\in R$  are the  eigenvalues of $A$.  Let $\theta: C_n(R) \to D_n(R)$ be the  map defined by
 	\[ A\mapsto PAP^{-1}=D\]
 	for all $A\in C_n(R)$.
 	Then $\theta$ is a determinant preserving ring isomorphism which implies that  $c_n(R,a)=d_n(R,a) $ for all $a\in R$.

 Based on the number  $d_n(R,0) $ given in  Theorem~\ref{thma0D}, the value of  $c_n(R,0) $ is given as  follows.
 \begin{theorem}  \label{thma0}  Let $R$ be a CFCR of nilpotency index $e$ and residue field $\mathbb{F}_q$  and let $n$ be a  positive  divisor of  $q-1$.
 	Then \begin{align}\label{eqa0} d_n(R,0)&=q^{ne} -(q-1)^n q^{(e-1)n} \sum_{i=0}^{e-1} \binom{n+i-1}{n-1} q^{-i}.
 	%	&=q^{ne} \left(1-(q-1)^n  \sum_{i=0}^{e-1} \binom{n+i-1}{n} q^{-n-i}\right)
 	\end{align}
 \end{theorem}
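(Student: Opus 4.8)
The plan is to sidestep any new combinatorics entirely and reduce the count of $n\times n$ circulant matrices of determinant $0$ to the diagonal count already established in Theorem~\ref{thma0D}, using the hypothesis $n\mid q-1$. Since $n$ divides $q-1$ and $\gcd(n,q)=1$, the unit group $U(R)$ contains an element $\omega$ of exact order $n$ (the Teichmüller/Hensel lift of an order-$n$ element of $\mathbb{F}_q^{\times}$), and $n$ is itself a unit in $R$. These two facts make the Vandermonde-type matrix $P=[\omega^{(i-1)(j-1)}]_{1\le i,j\le n}$ invertible \emph{over $R$}: its determinant is, up to a unit, $\prod_{0\le j<k\le n-1}(\omega^{k}-\omega^{j})$, and each factor equals $\omega^{j}(\omega^{k-j}-1)$ with $\omega^{k-j}-1$ a unit because $\prod_{m=1}^{n-1}(1-\omega^{m})=n\in U(R)$. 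Hence, by \eqref{eig} and \cite[Corollary 1]{C1975}, every $A\in C_n(R)$ is simultaneously diagonalized by this one matrix $P$, namely $PAP^{-1}={\rm diag}(w_0,w_1,\dots,w_{n-1})$ with the $w_j$ as in \eqref{eig}.

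First I would record that the induced map $\theta\colon C_n(R)\to D_n(R)$, $A\mapsto PAP^{-1}$, is a determinant-preserving ring isomorphism: it is a ring homomorphism since conjugation by a fixed invertible matrix is; it satisfies $\det(\theta(A))=\det(A)$; and it is bijective because $(a_1,\dots,a_n)\mapsto(w_0,\dots,w_{n-1})$ is the discrete Fourier transform over $R$ attached to $\omega$, whose matrix is the invertible $P$. This is precisely the content of the paragraph preceding the theorem, so in the write-up it can simply be cited rather than reproved. In particular $\theta$ restricts to a bijection $C_n(R,0)\to D_n(R,0)$, so that $c_n(R,0)=d_n(R,0)$.

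With that identity in hand the theorem is immediate: Theorem~\ref{thma0D} gives
\[
c_n(R,0)=d_n(R,0)=q^{ne}-(q-1)^{n}q^{(e-1)n}\sum_{i=0}^{e-1}\binom{n+i-1}{n-1}q^{-i}.
\]
I do not expect a genuine obstacle: the real work — the recursion of Lemma~\ref{lema0} and the double induction of Theorem~\ref{thma0D} — is already done. The only point that deserves a careful sentence, and the only place where the strengthened hypothesis $n\mid q-1$ (rather than merely $\gcd(n,q)=1$) is actually used, is the invertibility of the diagonalizing matrix $P$ over the chain ring $R$ and not just over its residue field; this is exactly what forces $\theta$ to be onto all of $D_n(R)$, hence what makes the circulant and diagonal enumeration problems literally the same problem here.
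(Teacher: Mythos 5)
Your proposal is correct and follows essentially the same route as the paper: the paragraph preceding Theorem~\ref{thma0} establishes the determinant-preserving ring isomorphism $\theta\colon C_n(R)\to D_n(R)$ via diagonalization, and the theorem then reads off $c_n(R,0)=d_n(R,0)$ from Theorem~\ref{thma0D}. Your write-up is in fact slightly more careful than the paper's, since you explicitly verify that a single DFT matrix $P$ is invertible over $R$ (each $\omega^{k}-\omega^{j}$ being a unit) and simultaneously diagonalizes all circulants, which is exactly what is needed for $\theta$ to be a surjective ring homomorphism onto $D_n(R)$.
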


 From the number $d_n(R,\gamma^s)$ in   Theorem~\ref{a=rD},  $c_n(R,\gamma^sb)= c_n(R,\gamma^s)$    is follows  for all units $b\in U(R)$ and all  integers $1\leq s<e$. 
 \begin{theorem} \label{a=r} Let $R$ be a CFCR of nilpotency index $e$ and residue field $\mathbb{F}_q$ and let  $\gamma$ be a generator of  the maximal ideal of $R$.  Then \[c_n(R,\gamma^s)=q^{(e-1)(n-1)}   (q-1)^{n-1}  \binom{n+s-1}{n-1}  \]
 	for all  integers $1\leq s<e$ and for all  positive  divisors $n$  of  $q-1$.
 \end{theorem}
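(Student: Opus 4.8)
The plan is to reduce Theorem~\ref{a=r} entirely to the diagonal case already settled in Theorem~\ref{a=rD}, exploiting the determinant-preserving ring isomorphism between circulant and diagonal matrices that becomes available precisely when $n$ divides $q-1$.

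First I would record what the hypothesis $n\mid(q-1)$ buys us: on the one hand it forces $\gcd(n,q)=1$, and on the other hand, more crucially, it guarantees that $R$ contains an $n$th root of unity lying in the set $V$ of representatives from Lemma~\ref{lem:propCR}. As explained in the paragraph preceding Theorem~\ref{thma0}, this makes every $A\in C_n(R)$ diagonalizable over $R$ with eigenvalues $w_1,\dots,w_n\in R$ given by \eqref{eig}, and the assignment $A\mapsto PAP^{-1}$, for a suitable fixed conjugating (Vandermonde-type) matrix $P$, yields a determinant-preserving ring isomorphism $\theta\colon C_n(R)\to D_n(R)$. Consequently $\theta$ restricts to a bijection $C_n(R,a)\to D_n(R,a)$ for every $a\in R$, whence $c_n(R,a)=d_n(R,a)$ for all $a\in R$.

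Then I would simply specialize to $a=\gamma^s$ with $1\leq s<e$ and invoke Theorem~\ref{a=rD}, obtaining
\[c_n(R,\gamma^s)=d_n(R,\gamma^s)=q^{(e-1)(n-1)}(q-1)^{n-1}\binom{n+s-1}{n-1},\]
which is the claim. For completeness I would also observe that the same value is attained by every zero-divisor $\gamma^s b$ with $b\in U(R)$: either use $c_n(R,\gamma^s b)=d_n(R,\gamma^s b)=d_n(R,\gamma^s)$ via part $2)$ of Theorem~\ref{ar=rD}, or quote Theorem~\ref{ar=r} directly, which is legitimate here since $\gcd(n,q)=1$.

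Since every ingredient is already in place, there is no genuine obstacle; the only point requiring care is that the map $\theta$ is truly well defined on all of $C_n(R)$ and determinant preserving, i.e.\ that a single matrix $P$ simultaneously diagonalizes every $n\times n$ circulant matrix over $R$ — this is exactly the content of the cited result \cite[Corollary 1]{C1975} combined with the eigenvalue description \eqref{eig}. Once that is granted, Theorem~\ref{a=r} is a one-line consequence of Theorem~\ref{a=rD}.
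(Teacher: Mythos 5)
Your proposal is correct and takes essentially the same route as the paper: the paper also establishes (in the paragraph preceding Theorem~\ref{thma0}) the determinant-preserving ring isomorphism $\theta\colon C_n(R)\to D_n(R)$ available when $n\mid(q-1)$, concludes $c_n(R,a)=d_n(R,a)$ for all $a\in R$, and then reads off Theorem~\ref{a=r} from Theorem~\ref{a=rD}. Your added remark that one must check a single Vandermonde-type $P$ diagonalizes all circulant matrices simultaneously is a fair point of care, but it is the same argument.
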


 \section{Conclusion and Remarks}
 \label{sec:concl}
 
 Summary, conjectures, open problems are given in  this section together with    a brief discussion on the determinants of diagonal and circulant matrices over  commutative finite principal ideal rings  (CFPIRs).

 \subsection{Conclusion}
 Determinants of  $n\times n$ matrices over  CFCRs $R$ with residue field $\mathbb{F}_q$  have been established  in \cite{CJU2017}.  Here,  determinants of $n\times n$ diagonal and circulant matrices over $R$ which are subrings of  the matrices in \cite{CJU2017} have been studied.  The number of  $n\times n$ diagonal matrices over $R$  of a fixed determinant  $a$ have been completely determined for all positive integers $n$ and for all elements $a\in R$.  For  nonsingular  circulant matrices,  the number of  $n\times n$ circulant  matrices over $R$ whose determinant is a unit in $R$  have been completely determined for all units in $R$ and for all positive integers $n$ such that $\gcd(n,q)=1$.   For singular   circulant matrices,  the number of  $n\times n$ circulant matrices over $R$ whose determinant is a non-unit  have been completely determined for all non-units in $R$   and positive  divisors $n$  of  $q-1$. The other  cases remain  as  open problems.

 \subsection{Determinants of Diagonal and Circulant Matrices over CFPIRs}
 
 A ring $\mathcal{R}$ with identity $1\ne  0$ is called a {\em commutative finite principal ideal ring} (CFPIR) if  $\mathcal{R}$  is finite, commutative,  and every ideal in  $\mathcal{R}$ is principal.  It is  well known (see\cite{GF2002}) that 
 every CFPIR is a direct product of CFCRs.
 Hence, a  CFPIR $\mathcal{R}$ can be written in the form of  $\mathcal{R}=R_1\times R_2\times \dots\times R_m$ for some positive  integer $m$, where $R_i$ is a CFCR for all $1\leq i\leq m$.  
 
 For each $1\leq i\leq m$, let $\phi_i:\mathcal{R}\to R_i$ be a  surjective ring homomorphism defined by \[\phi_i((r_1,r_2,\dots,r_m))=r_i.\]  
 Let $M_n( \mathcal{R})$ be the ring of $n\times n$ matrices over $\mathcal{R}$. 
 In \cite[Theorem 4.1]{CJU2017}, it has been shown that  the map $\Phi:  M_n( \mathcal{R})\to   M_n( {R}_1) \times M_n( {R}_2)\times \dots \times M_n( {R}_m)   $ defined by 
 \[  [a_{ij}] \mapsto ([\phi_1(a_{ij})], [\phi_2(a_{ij})] ,\dots, [\phi_m(a_{ij})])\]
 is a ring isomorphism.  Since $D_n(\mathcal{R} )$ and  $C_n(\mathcal{R} )$ are   subrings of $M_n( \mathcal{R})$,  
 the restriction maps   $\Phi{\vert }_{D_n( \mathcal{R})}:  D_n( \mathcal{R})\to   D_n( {R}_1) \times D_n( {R}_2)\times \dots \times D_n( {R}_m)   $ 
 and  $\Phi{\vert }_{C_n( \mathcal{R})}:  C_n( \mathcal{R})\to   C_n( {R}_1) \times C_n( {R}_2)\times \dots \times C_n( {R}_m)   $  are ring isomorphisms. 
 Consequently, the following theorem can be obtained using the arguments similar to those in the proof of \cite[Theorem 4.1]{CJU2017}.
 \begin{theorem} Let  $\mathcal{R}=R_1\times R_2\times \dots \times R_m$ be a CFPIR where $R_1, R_2, \dots, R_m$  be  CFCRs and let $n$ be a positive integer. Let $r\in \mathcal{R}$ and let  $\phi_i$'s  be defined as above.  Then \[ d_n(\mathcal{R},r)=d_n(R_1,\phi_1(r))d_n(R_2,\phi_2(r))\dots d_m(R_m,\phi_m(r) )\]
 	and
 	\[ c_n(\mathcal{R},r)=c_n(R_1,\phi_1(r))c_n(R_2,\phi_2(r))\dots c_m(R_m,\phi_m(r) ).\]
 \end{theorem}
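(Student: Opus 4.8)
The plan is to deduce both identities from the ring isomorphisms $\Phi|_{D_n(\mathcal{R})}$ and $\Phi|_{C_n(\mathcal{R})}$ recalled just above, combined with the elementary fact that a ring homomorphism commutes with taking determinants. First I would record the determinant-compatibility of $\Phi$: for $A=[a_{ij}]\in M_n(\mathcal{R})$, writing $\Phi(A)=(A_1,\dots,A_m)$ with $A_i=[\phi_i(a_{ij})]$, the Leibniz expansion $\det A=\sum_{\sigma\in S_n}\operatorname{sgn}(\sigma)\prod_{i=1}^n a_{i\sigma(i)}$ together with the additivity and multiplicativity of the ring homomorphism $\phi_i$ (and $\operatorname{sgn}(\sigma)\cdot 1_{\mathcal{R}}\mapsto\operatorname{sgn}(\sigma)\cdot 1_{R_i}$) gives $\det(A_i)=\phi_i(\det A)$ for every $i$. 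Hence, for $r\in\mathcal{R}$, a matrix $A\in D_n(\mathcal{R})$ satisfies $\det(A)=r$ if and only if $\det(A_i)=\phi_i(r)$ for all $i$, and likewise with $C_n$ in place of $D_n$.

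Next I would use that $\Phi|_{D_n(\mathcal{R})}\colon D_n(\mathcal{R})\to D_n(R_1)\times\dots\times D_n(R_m)$ is a bijection (as stated in the excerpt). By the previous paragraph it restricts to a bijection
\[
\Phi|_{D_n(\mathcal{R},r)}\colon D_n(\mathcal{R},r)\ \xrightarrow{\ \sim\ }\ D_n(R_1,\phi_1(r))\times\dots\times D_n(R_m,\phi_m(r)).
\]
Taking cardinalities and using that the cardinality of a finite direct product is the product of the cardinalities yields $d_n(\mathcal{R},r)=\prod_{i=1}^m d_n(R_i,\phi_i(r))$. The identical argument with $C_n$ replacing $D_n$ gives the circulant identity $c_n(\mathcal{R},r)=\prod_{i=1}^m c_n(R_i,\phi_i(r))$; here one only needs that $\Phi$ sends circulant matrices to tuples of circulant matrices, which is immediate since applying $\phi_i$ entrywise preserves the circulant shape, so $\Phi|_{C_n(\mathcal{R})}\colon C_n(\mathcal{R})\to C_n(R_1)\times\dots\times C_n(R_m)$ is a bijection (indeed a ring isomorphism, as noted).

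The whole argument is essentially bookkeeping built on the isomorphisms already available, so I do not expect a serious obstacle. The one point deserving care is the determinant-compatibility identity $\det(A_i)=\phi_i(\det A)$, i.e.\ that a ring homomorphism commutes with determinants; I would justify this once and for all directly from the Leibniz formula and then invoke it for both $D_n$ and $C_n$. After that, the two displayed formulas follow by restriction of the respective isomorphisms and counting, exactly in the spirit of the proof of \cite[Theorem 4.1]{CJU2017}.
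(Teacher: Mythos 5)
Your proposal is correct and follows essentially the same route the paper intends: it restricts the ring isomorphism $\Phi$ to $D_n(\mathcal{R})$ and $C_n(\mathcal{R})$, uses the compatibility $\det([\phi_i(a_{jk})])=\phi_i(\det A)$, and counts fibers, which is exactly the argument of \cite[Theorem 4.1]{CJU2017} that the paper invokes without spelling out. No issues.
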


 \subsection{Conjectures and Open Problems}
 Based on our observation,   we conjecture that the condition $\gcd(n,q)=1$ in Theorem~\ref{ar=r} can be omitted.  However, the original proof of  Theorem~\ref{ar=r} does not work. 
 \begin{conjecture}   Let $R$ be a CFCR of nilpotency index $e$   and let $n$ be a positive integer.     If the maximal ideal of $R$ is generated by $\gamma$, then  \[c_n(R,\gamma^s)=c_n(R,b\gamma^s) \] for all units $b$ in $U(R)$ and   $0\leq s\leq e$.   This number can be zero.
 \end{conjecture}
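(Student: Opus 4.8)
The plan is to reduce the whole conjecture to a single surjectivity statement and then re-use the bijection of Theorem~\ref{ar=r}. Concretely, I would try to prove the lemma that $\det(C_n(R))\supseteq U(R)$, i.e.\ that \emph{every} unit of $R$ is the determinant of some circulant matrix, with no hypothesis on $\gcd(n,q)$. Granting this, fix a unit $b$ and choose $B\in C_n(R)$ with $\det(B)=b$; since $\det(B)$ is a unit, $B$ is invertible in the commutative ring $C_n(R)$, and $A\mapsto BA$ is a bijection $C_n(R,\gamma^s)\to C_n(R,b\gamma^s)$ because $\det(BA)=b\det(A)$. This gives $c_n(R,\gamma^s)=c_n(R,b\gamma^s)$ for all $0\le s\le e$, exactly as in Theorem~\ref{ar=r} but with Lemma~\ref{lemOnto} replaced by the weaker unit-surjectivity (the case $s=e$ is trivial since $\gamma^e=0=b\gamma^e$). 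Note the argument only compares two fibres and never asserts they are non-empty, which is why the common value may be $0$.

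It is worth localising where the difficulty sits. For a fixed $s\ge 1$ the bijection needs only a circulant $B$ with $\det(B)\equiv b\pmod{\gamma^{e-s}}$, since then $\det(B)\gamma^s=b\gamma^s$; thus progressively less surjectivity is required as $s$ grows, and the full strength $\det(C_n(R))\supseteq U(R)$ is needed only at $s=0$. So the heart of the matter is realising every unit as a circulant determinant, and this is the case $s=0$.

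To attack the surjectivity lemma I would pass through $C_n(R)\cong R[X]/\langle X^n-1\rangle$, under which $\det$ becomes the norm (determinant of multiplication) $N\colon R[X]/\langle X^n-1\rangle\to R$; on units it is a group homomorphism, so its image is a \emph{subgroup} of $U(R)$ and it suffices to show this subgroup is all of $U(R)$. For the base case $e=1$ write $n=p^km$ with $p=\operatorname{char}\mathbb{F}_q$ and $\gcd(m,q)=1$; then $X^m-1$ is separable over $\mathbb{F}_q$, so $X^n-1=\prod_i g_i(X)^{p^k}$ with the $g_i$ distinct irreducibles, and the Chinese Remainder Theorem gives $\mathbb{F}_q[X]/\langle X^n-1\rangle\cong\prod_i\mathbb{F}_q[X]/\langle g_i^{p^k}\rangle$. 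The factor from $g_0=X-1$ is a local ring $A_0$ with residue field $\mathbb{F}_q$ on which the norm sends a unit $u$ to $\bar u^{\,p^k}$ (the determinant computed along the length-$p^k$ filtration of $A_0$); since the Frobenius power $t\mapsto t^{p^k}$ is an automorphism of $\mathbb{F}_q^{*}$, units of the form $(u_0,1,\dots,1)$ already show $N$ surjects onto $\mathbb{F}_q^{*}$.

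The hard part will be lifting this surjectivity from $\mathbb{F}_q$ to $R$. I would induct on $e$ through the reduction $C_n(R)\to C_n(R/\gamma^{e-1}R)$ with kernel $C_n(\gamma^{e-1}R)$: given $b\in U(R)$, lift a norm-preimage of its image to some $B_1\in C_n(R)$, so that $\det(B_1)=b(1+\gamma^{e-1}c)$, and then cancel the error by multiplying with a circulant $1+\gamma^{e-1}S$, whose norm is $1+\gamma^{e-1}\operatorname{Tr}(\overline{S})$. This reduces the inductive step to surjectivity of the trace form $\operatorname{Tr}\colon C_n(\mathbb{F}_q)\to\mathbb{F}_q$, and here lies the obstacle: multiplication by $X^j$ on $\mathbb{F}_q[X]/\langle X^n-1\rangle$ is a basis shift, so $\operatorname{Tr}(X^j)=n$ if $n\mid j$ and $0$ otherwise, whence $\operatorname{Tr}\equiv 0$ precisely when $p\mid n$. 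Thus for $p\mid n$ and $e\ge 2$ the first-order correction vanishes, and the principal units $1+\gamma^{e-1}R$ cannot be reached by this infinitesimal argument. Overcoming this degeneracy — showing the $p$-part of $U(R)$ still lies in the image by exhibiting genuinely higher circulant determinants beyond the scalars $cI$ (giving $c^n$) and the pencil $\mathrm{cir}(1+c,c,\dots,c)$ (giving $1+nc$) — is exactly the crux on which the conjecture turns.
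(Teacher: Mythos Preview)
The statement you are addressing is a \emph{conjecture} in the paper, not a theorem: the authors explicitly say the proof of Theorem~\ref{ar=r} does not extend when $\gcd(n,q)\ne 1$ and leave the claim open. There is therefore no proof in the paper to compare your proposal against.

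Your reduction is correct and worth recording: you observe that Conjecture~1 follows from Conjecture~2 (that $U(R)\subseteq\det(C_n(R))$), via the same multiplication-by-$B$ bijection used in Theorem~\ref{ar=r}. Your base case $e=1$ is also sound: with $n=p^km$, the factor $\mathbb{F}_q[Y]/\langle Y^{p^k}\rangle$ coming from $g_0=X-1$ has norm $u\mapsto \bar u^{\,p^k}$, and since $\gcd(p^k,q-1)=1$ this already hits all of $\mathbb{F}_q^{*}$. So you have in fact proved Conjecture~2, hence Conjecture~1, in the special case $e=1$.

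The genuine gap is exactly where you locate it. For $e\ge 2$ and $p\mid n$ your first-order correction $\det(I+\gamma^{e-1}S)=1+\gamma^{e-1}\operatorname{tr}(S)=1+\gamma^{e-1}\,n s_1$ is identically $1$ modulo $\gamma^e$, so the kernel-perturbation step cannot adjust the determinant at all, and the induction breaks down. You acknowledge this, but offer no replacement mechanism; the candidates you list ($cI$, giving $c^n$, and $\mathrm{cir}(1+c,c,\dots,c)$, giving $1+nc$) do not suffice when $p\mid n$, since then $c\mapsto c^n$ need not surject onto $U(R)$ (the $p$-group part of $U(R)$ obstructs) and $1+nc$ lands in $1+\gamma R$ rather than the needed $1+\gamma^{e-1}R$ only trivially. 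Thus your proposal is a coherent plan that isolates the correct obstacle, but it remains a plan: the conjecture is still open after your analysis, just as it is in the paper.
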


 In Lemma~\ref{lemOnto},  $ \{ \det(A)  \mid A\in C_n(R)\} =R$  for all  positive integers $n$ such that  $\gcd(n,q)=1$.  Once  $\gcd(n,q)\ne 1$,    $ \{ \det(A)  \mid A\in C_n(R)\} $ does not need to equal $R$,  e.g.,  there are  no $2\times 2 $ matrices over $\mathbb{Z}_4$ whose determinant is a zero-divisor $2$.    We have the following conjecture. 
 
 \begin{conjecture}   Let $R$ be a CFCR of nilpotency index $e$   and let $n$ be a positive integer.  Then $U(R)  \subseteq \{ \det(A)  \mid A\in C_n(R)\} $. 
 	Precisely,   $c_n(R,a) >0$ for all units $a\in U(R)$.
 \end{conjecture}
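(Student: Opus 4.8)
The plan is to reinterpret the determinant of a circulant matrix as a norm and to analyse its image through the Chinese Remainder decomposition of the group algebra. The case $\gcd(n,q)=1$ is already covered by Lemma~\ref{lemOnto}, so I would assume $p\mid n$, where $p$ is the characteristic of $\mathbb{F}_q$, and write $n=n'p^{k}$ with $p\nmid n'$. Via the isomorphism $T\colon C_n(R)\to R[X]/\langle X^{n}-1\rangle$, the determinant of ${\rm cir}(a_1,\dots,a_n)$ equals the determinant $N(f)$ of the $R$-linear multiplication-by-$f$ map on $S_R:=R[X]/\langle X^{n}-1\rangle$, where $f=a_1+a_2X+\dots+a_nX^{n-1}$. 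Thus $\{\det(A)\mid A\in C_n(R)\}=\{N(f)\mid f\in S_R\}$, and since $N$ is multiplicative and $N(f)$ is a unit exactly when $f$ is, the realizable units form a subgroup $G:=N(U(S_R))$ of $U(R)$. The goal is to prove $G=U(R)$.

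First I would decompose $S_R$. As $\gcd(n',q)=1$, the polynomial $X^{n'}-1$ is separable modulo $\gamma$, so by Hensel lifting $X^{n}-1=\prod_i H_i$ factors over $R$ into pairwise coprime primary polynomials, and the Chinese Remainder Theorem gives $S_R\cong\prod_i L_i$ with $L_i=R[X]/\langle H_i\rangle$ local. The multiplication map is then block diagonal, so $N=\prod_i N_{L_i/R}$ and $G$ is the subgroup of $U(R)$ generated by the images $N_{L_i/R}(U(L_i))$. The factor attached to the root $1$ is $H_1=X^{p^{k}}-1$, whence $L_1\cong R[X]/\langle X^{p^{k}}-1\rangle\cong C_{p^{k}}(R)$; in particular the units realized by $C_n(R)$ already include those realized by $C_{p^{k}}(R)$, and the problem self-reduces to the pure prime-power length $n=p^{k}$.

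Next I would clear the part of $U(R)$ of order prime to $p$. Write $U(R)=\langle\zeta\rangle\times(1+\gamma R)$, where $\zeta$ is a lift of a generator of $\mathbb{F}_q^{\times}$ and $1+\gamma R$ is the Sylow $p$-subgroup of principal units (of order $q^{e-1}$ by Lemma~\ref{lem:propCR}). Reduction modulo $\gamma$ commutes with $N$ and carries units to units, so $G$ maps onto the group of realizable units over the residue field. Over $\mathbb{F}_q$ the statement is easy: each $L_i\bmod\gamma\cong\mathbb{F}_{q^{d_i}}[Y]/\langle Y^{p^{k}}\rangle$, its norm to $\mathbb{F}_{q^{d_i}}$ sends $f$ to the $p^{k}$-th power of its constant term, $p^{k}$-th powering is a bijection of $\mathbb{F}_{q^{d_i}}^{\times}$, and $N_{\mathbb{F}_{q^{d_i}}/\mathbb{F}_q}$ is surjective; hence every factor already surjects onto $\mathbb{F}_q^{\times}$. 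Therefore $G$ surjects onto $U(R)/(1+\gamma R)$, and since $G$ is a subgroup it follows that $G=U(R)$ if and only if $1+\gamma R\subseteq G$. It remains to realize all principal units as circulant determinants.

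The hard part, and the point at which I expect the genuine obstruction, is this last inclusion. The obvious inductive correction fails: if $A$ is a circulant over $R$ with $\det(A)$ a unit and $e\ge2$, then $\det(A+\gamma^{e-1}B)=\det(A)+\gamma^{e-1}{\rm tr}({\rm adj}(A)B)$, but for circulants ${\rm tr}({\rm adj}(A)B)$ is a multiple of $n$ and $n\equiv0\pmod{\gamma}$ because $p\mid n$, so the first-order term vanishes and the determinant cannot be adjusted modulo $\gamma^{e}$. Equivalently, the trace form of $L_1$ is degenerate modulo $\gamma$ since $p\mid\dim_R L_1=p^{k}$, so $N$ maps $1+\gamma L_1$ into $1+\gamma^{2}R$ and can never reach $1+\gamma t$ with $t$ a unit by perturbing near the identity. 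Explicit small cases (e.g.\ $\det({\rm cir}(0,2,2))=7$ over $\mathbb{Z}_9$, with $7=1+3\cdot2$) show that the missing principal units are instead produced by units $f\in U(L_1)$ lying outside $1+\gamma L_1$, i.e.\ with non-scalar reduction $\bar f$. I would therefore abandon the linearized correction and study $N_{L_1/R}$ on the whole of $U(L_1)$ directly: setting $y=X-1$, expand $N$ along the $\gamma$-adic filtration of $U(L_1)$, retaining the second-order terms that survive the degeneracy, and run an induction on $e$ in which the inclusion $1+\gamma R\subseteq N_{L_1/R}(U(L_1))$ is extracted from a fiber-counting argument rather than a linear one. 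Proving this surjectivity of the norm onto the principal units is, I expect, the crux that has so far kept the statement open.
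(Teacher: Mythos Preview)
The paper does not prove this statement; it is listed as Conjecture~2 in Section~\ref{sec:concl} precisely because the author did not know how to establish it. There is therefore no proof in the paper to compare your proposal against.

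Your proposal is not a proof either, and you acknowledge as much. The reductions you carry out are correct: interpreting the circulant determinant as the norm $N_{S_R/R}$ on $S_R=R[X]/\langle X^n-1\rangle$, using Hensel lifting of the coprime factorization of $X^n-1$ modulo $\gamma$ to obtain a CRT splitting $S_R\cong\prod_i L_i$, observing that the factor $H_1$ attached to the root $1$ is exactly $X^{p^k}-1$ so that $L_1\cong C_{p^k}(R)$, and hence reducing to $n=p^k$; then showing that the image $G=N(U(S_R))$ surjects onto $\mathbb{F}_q^{\times}$, so that $G=U(R)$ is equivalent to $1+\gamma R\subseteq G$. Your diagnosis of why the first-order correction fails---the trace form of $L_1$ over $R$ is degenerate modulo $\gamma$ because $p\mid\dim_R L_1$, so $N$ maps $1+\gamma L_1$ into $1+\gamma^2 R$---is also sound, and the $\mathbb{Z}_9$ example is well chosen.

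But at exactly the point where a proof would have to begin, you stop: the proposal to ``expand $N$ along the $\gamma$-adic filtration retaining second-order terms'' and to run a fibre-counting induction on $e$ is a plan, not an argument, and you yourself call this ``the crux that has so far kept the statement open.'' That is consistent with the paper's position. Your outline is a reasonable framing of the problem, but the conjecture remains open.
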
 
 
 In general, it is interesting to solve the following open problems.
 \begin{problem} For a CFCR $R$ with residue field $\mathbb{F}_q$, determine $c_n(R,a) $ for all   $a\in U(R)$ and for all positive  integers $n$ such  $\gcd(n,q)\ne 1$. 
 \end{problem}
 
 \begin{problem} For a CFCR $R$ with residue field $\mathbb{F}_q$, determine $c_n(R,a) $ for all  non-units $a\in \gamma R$ and for all positve integers $n$   which are not a divisor of  $q-1$.
 \end{problem}

 \begin{problem} For a commutative  finite ring $\mathfrak{R}$
 	and $a\in  \mathfrak{R}$,  determine $d_n(\mathfrak{R},a)$ and  $c_n(\mathfrak{R},a)$   for all positive integers $n$.
 \end{problem}

\end{document}